\documentclass{IEEEtran}
\usepackage{cite}
\usepackage{amsmath,amssymb,amsfonts}
\usepackage{graphicx}
\usepackage{textcomp}
\usepackage{amsthm}
\usepackage{xcolor}
\usepackage{mathtools}
\usepackage{siunitx}

\usepackage[algo2e,ruled,vlined,linesnumbered,norelsize]{algorithm2e}
\makeatletter
\newcommand{\removelatexerror}{\let\@latex@error\@gobble}
\makeatother

\theoremstyle{definition}
\newtheorem{definition}{Definition}
\newtheorem{theorem}{Theorem}
\newtheorem{lemma}{Lemma}

\theoremstyle{remark}
\newtheorem{remark}{Remark}

\def\iter{i}
\def\maxiter{\iter_{\mathit{max}}}
\def\lsiter{\iter^\prime}
\def\maxlsiter{\maxiter^\prime}
\def\algtol{\mathit{tolerance}}
\def\infeasflag{\mathit{LineSearch}}
\def\old{\mathrm{old}}
\def\algfalse{\mathit{false}}
\def\algtrue{\mathit{true}}

\begin{document}
\title{Robust adaptive NMPC using ellipsoidal tubes}
\author{\begin{tabular}{@{}c@{}}
Johannes Buerger\\BMW Group, Munich, Germany\\
\texttt{johannes.buerger@bmw.de}
\end{tabular}
\qquad\qquad
\begin{tabular}{@{}c@{}}
Mark Cannon\\
University of Oxford, UK\\
\texttt{mark.cannon@eng.ox.ac.uk}
\end{tabular}
\vspace{-1.5\baselineskip}}


\maketitle

\begin{abstract}
We propose a computationally efficient nonlinear Model Predictive Control (NMPC) algorithm for safe, learning-based control. The system model is  represented as an affine combination of basis functions with unknown parameters, and is subject to additive set-bounded disturbances.
Our algorithm employs successive linearization around nominal predicted trajectories and accounts for uncertainties in predicted states due to linearization, model errors, and disturbances using ellipsoidal sets.
The ellipsoidal tube-based approach ensures that constraints on control inputs and system states are satisfied. Robustness to uncertainty is ensured using bounds on linearization errors and a backtracking line search.
We show that the ellipsoidal embedding of model uncertainty scales favourably with system dimensions in numerical simulations.
The algorithm incorporates set membership parameter estimation, and provides guarantees of recursive feasibility and input-to-state practical stability.
\end{abstract}

\begin{IEEEkeywords}
nonlinear model predictive control, adaptive control, tube model predictive control, convex optimization
\end{IEEEkeywords}

\section{Introduction}\label{introduction}
Model Predictive Control (MPC) is an optimization-based control strategy in which a finite-horizon optimal control problem is solved at each discrete time step using the current estimate of the model state, and the first control input is applied~\cite{Kou16}.
Prediction models may be constructed using diverse physical modelling and system identification methods, but residual model uncertainty is unavoidable, and this motivates learning-based and adaptive MPC strategies~\cite{Ade09,Lor19,Hew20}. In particular, robust adaptive MPC can ensure constraint satisfaction and stability while allowing online model identification.

Tube MPC achieves robust constraint satisfaction and stability by constructing tubes to bound uncertain model trajectories, and the tube geometry largely determines the trade-off between tractability and conservatism.
For linear systems, robust tube MPC is well-established, including adaptive variants with online parameter set updates~\cite{Lor19,Lu21}.  
However, to maintain tractable online optimization and receding-horizon feasibility and stability guarantees, existing approaches for nonlinear systems (e.g.~\cite{Ade09,Koh21,Sasfi23,leeman23,Bue24}) require restrictive offline controller design or lead to nonconvex online problems.
In particular~\cite{Ade09,Koh21,Sasfi23} rely on solving  a nonconvex problem online, while~\cite{leeman23} does not guarantee recursively feasible receding-horizon implementation.
Moreover~\cite{Koh21,Sasfi23} rely on computationally intensive offline design of control laws using contraction metrics.

This note describes a robust adaptive MPC algorithm   
using sequential convex approximation and ellipsoidal tubes (e.g.~\cite{kurzhanski02}) to account for model uncertainty. 
Similar to the polytopic tube MPC algorithm of~\cite{Bue24}, we assume knowledge of a locally stabilizing controller within a terminal set (this is generally easier to ensure than global incremental stability~\cite{Koh21,Sasfi23}), and solve online a sequence of convex problems with recursive feasibility and convergence guarantees. In common with other sequential convex programming algorithms for smooth problems~\cite{messerer21}, the rate of convergence is at least linear. 
The use of ellipsoidal rather than the polytopic tubes of~\cite{Bue24}  improves scalability with problem size and provides a favourable balance between computation and performance.
Here we extend the theory of~\cite{Can11} to systems with additive and parametric uncertainties, deriving a robust adaptive nonlinear MPC strategy that employs set membership parameter estimation~\cite{Lor19,Lu21} as a learning method. The resulting algorithm is formulated as a second-order cone program.

The MPC optimization is convexified using linearization, and to ensure recursive feasibility and stability, the perturbations around state and control linearization points are confined to polytopic sets and approximation errors are bounded using ellipsoidal tubes containing predicted state trajectories. 
By integrating a line search procedure into the online algorithm, this approach provides recursive feasibility and performance guarantees even if only one optimization iteration is performed at each online time step.
This note extends the preliminary publication~\cite{Buerger25} by demonstrating input-to-state practical stability, providing additional implementation details, and by extending the numerical analysis to illustrate how computation and performance scale with problem size in comparison to polytopic tube robust nonlinear MPC.

The remainder of this note is organised as follows. Section~\ref{sec:problem_statement} introduces the control problem, 
Sections~\ref{sec:error_bounds}, \ref{sec:tube_conditions} and \ref{sec:opt} discuss approximation error bounds, ellipsoidal tube MPC and online optimization, respectively. Section~\ref{sec:termset} derives terminal conditions, Section~\ref{sec:stability} analyses feasibility and stability, and Section~\ref{sec:param_est} discusses parameter estimation.
Section~\ref{sec:simex} investigates scaling of computation and performance with system dimensions, and conclusions are presented in Section~\ref{sec:conclusion}.

\textit{Notation:} $\mathbb{N}_{\geq 0}$ is the set of non-negative integers, $\mathbb{N}_{[p,q]} = \{n\in\mathbb{N} : {p \leq n \leq q}\}$, and $\mathbb{N}_q =  \mathbb{N}_{[1,q]}$.
A vector with all elements equal to 1 is denoted $\underline{1}$ and the $n\times n$ identity matrix is $I_n$.
The $i$th row of a matrix $A$ is $[A]_i$.
For a matrix $A$, the inequality $A \geq 0$ applies elementwise, and $A \succeq 0$ (or $A \succ 0$) indicates that $A$ is positive semidefinite (positive definite).  
The Euclidean norm is $\|x\|$ and $\|x\|_Q = (x^\top Q x)^{1/2}$ for $Q\succeq 0$.  

\section{Optimal control problem}\label{sec:problem_statement}

We consider a discrete time system with state $x$, control input $u$, unknown parameter vector $\theta$, and additive disturbance~$w$
\begin{equation}\label{eq:system}
x_{t+1} = f(x_t,u_t,\theta) + w_t ,
\end{equation}
where $(x_t,u_t) \in \mathcal{X}\times\mathcal{U}$, $\theta \in \Theta_0$, $w_t\in \mathcal{W}$, and $t$ is the discrete time index.
We assume $\mathcal{X}$, $\mathcal{U}$, $\Theta_0$, $\mathcal{W}$ are known polytopic sets:
$\mathcal{U} = \{u \in \mathbb{R}^{n_u} : G u \leq 1\}$,
$\mathcal{X} = \{x \in \mathbb{R}^{n_x}  : E x \leq 1\}$,
$\Theta_0 = \{\theta \in \mathbb{R}^{n_{\theta}} : H_{\Theta} \theta \leq h_0\}=\mathrm{co}\{\theta^{(q)},\, q \in\mathbb{N}_{\nu_{\theta}}\}$, and $\mathcal{W}=\mathrm{co}\{w^{(r)}, \, r \in\mathbb{N}_{\nu_{w}}\}$.

The function $f$ is assumed to be an affine combination of known basis functions $f_i(x,u)$, $i \in\mathbb{N}_{n_{\theta}}$
\begin{equation}\label{eq:theta_expansion}
f\bigl(x,u,(\theta_1,\ldots,\theta_{n_\theta})\bigr) = f_0(x,u) + \sum_{i=1}^{n_{\theta}} {\theta_i f_i(x,u)} ,
\end{equation}
where,  for each $i$, $f_i$ is differentiable and Lipschitz continuous on $\mathcal{X}\times\mathcal{U}$ with $f_i(0,0) = 0$.
The proposed approach can be used with any parameter estimator that provides a compact polytopic set $\Theta_t$ satisfying $\theta \in \Theta_t \subseteq \Theta_{t-1}$ for all $t>0$.

The control objective is to minimize an upper bound over model uncertainty of a quadratic cost with weights $Q,R\succ0$:
\begin{equation} \label{eq:cost}
\sum_{t=0}^{\infty} (\left\lVert x_t \right\rVert^2_Q +  \left\lVert u_t \right\rVert^2_R).
\end{equation}
To mitigate the effects of model uncertainty on predicted future state and control sequences, the control decision variables are perturbations $\{v_t, v_{t+1} ,\ldots\}$ applied to a feedback law
\[
u_t = K x_t + v_t ,
\]
where the feedback gain $K$ is robustly stabilizing, at least locally around $x=0$, in the sense defined in Section~\ref{sec:termset}.
To simplify notation, for all $(x,v)\in\mathcal{X}\times\mathcal{U}$ and $\theta\in\Theta_0$, let
\[
f_{K}(x,v,\theta) = f(x,Kx+v,\theta)
\]
and let $f_{K,i}(x,v) = f_i(x,Kx+v)$ for $i=0,\ldots,n_{\theta}$.

\section{Linearization error bounds}\label{sec:error_bounds}
We consider the Taylor expansion of the model in~\eqref{eq:system} around a nominal trajectory $\mathbf{x}^0 = \{x^0_0,\ldots, x^0_{N}\}$, which is defined for given
$\mathbf{v}^0 = \{v^0_0, \ldots, v^0_{N-1}\}$ 
and $\theta^0\in\Theta$ 
by
\begin{equation}\label{eq:nominal_system}
x^0_{k+1} = f_K(x_k^0, v^0_k, \theta^0), \ k = 0,\ldots, N-1.
\end{equation}
Defining state and control perturbations $s_k$ and $v_k$, where $x_k = x_k^0 + s_k$ and $u_k = Kx_k + v^0_k + v_k$, we have
\begin{equation}\label{eq:taylor}
x^0_{k+1}+s_{k+1} =  f_K(x^0_k,v^0_k,\theta^0) + \delta^0_k + \Phi_k s_k + B_k v_k + \delta^1_k + w_k
\end{equation}
where $\Phi_k = \nabla_x f_K (x^0_k,v^0_k,\theta^0)$ and $B_k = \nabla_v f_K (x^0_k,v^0_k,\theta^0)$ denote the Jacobian matrices of $f_K$ with respect to $x$ and~$v$.

The perturbations on the state and control input are constrained to satisfy $s_k\in \mathcal{S}$, $v_k\in \mathcal{V}$ for all $k\in \mathbb{N}_{[0,N]}$, where $\mathcal{S}$, $\mathcal{V}$ are bounded polytopic sets containing the origin. The perturbation $s_k$ contains a nominal component and an uncertain component (caused by linearization errors, parameter errors and disturbances occurring prior to the $k$th time step). In this context we distinguish the zero-order error term $\delta^0_k$ and the first-order error term $\delta^1_k$
as follows.

\begin{definition}\label{def:delta} The error terms $\delta^0_k$ and $\delta^1_k$ in \eqref{eq:taylor} are defined by
\begin{align*}
\delta^0_k &= f_K(x_k^0, v^0_k, \theta) - f_K(x_k^0, v^0_k, \theta^0)
\\
\delta^1_k &= f_K(x_k^0 + s_k, v^0_k + v_k, \theta) - f_K(x_k^0, v^0_k, \theta) - \Phi_k s_k \!-\! B_k v_k
\end{align*}
\end{definition}

Bounds on $\delta^0_k$ are derived from the affine dependence on $\theta$:
\[
\delta^0_k = f_K(x^0_k, v^0_k,{\theta}) -f_K(x^0_k, v^0_k,\theta^0) = f_K (x^0_k, v^0_k, \theta - \theta^0) ,
\]
which implies a polytopic additive disturbance bound: 
\begin{equation}\label{eq:W0_def}
\delta^0_k \in \mathcal{W}^0_k = \mathcal{W}^0(x^0_k,v^0_k,\theta^0,\Theta) = \mathrm{co}
\{\delta^{0\,(q)}_k, \, q \in \mathbb{N}_{\nu_\theta}\}.
\end{equation}
A tight bound can be obtained by elementwise maximization over $\theta \in \Theta$. The set $\mathcal{W}_k^0$ can be recomputed online using the current parameter set estimate and nominal trajectory.

Bounds on $\delta^1_k$ can similarly be derived using Definition~\ref{def:delta}. The mean value theorem implies, for some $(s,v)\in \mathcal{S}\times \mathcal{V}$,
\begin{align}
\delta^1_k &= \bigl(\nabla_x f_K(x^0_k+s, v^0_k+v, {\theta}) - \Phi_k\bigr) s_k
\nonumber \\
&\quad + \bigl(\nabla_v f_K (x^0_k+s, v^0_k+v, {\theta})- B_k \bigr) v_k . 
\label{eq:mvt_delta1}
\end{align}
A corresponding polytopic uncertainty set can be defined
\begin{align}
\delta^1_k \in \mathcal{W}^1_k 
&= \mathcal{W}^1  (x^0_k,v^0_k,\theta^0, \mathcal{S}, \mathcal{V}, \Theta) \nonumber
\\
&\quad = \mathrm{co} \{ C^{(j)}_ks_k + D^{(j)}_k v_k, \, j \in \mathbb{N}_{\nu_{1}} \}
\label{eq:W1_def}
\end{align}
where $\{C^{(j)}_k,D^{(j)}_k,\, j \in \mathbb{N}_{\nu_{1}}\} $ are determined, for example, from componentwise bounds on the Jacobian matrices in~(\ref{eq:mvt_delta1}) for $(s,v,\theta)\in \mathcal{S}\times\mathcal{V}\times \Theta$.
The computation required is example-dependent. In some cases (such as the examples in Section~\ref{sec:simex}) the bounds are attained at the extreme points of $\mathcal{S}, \mathcal{V}$,
allowing $\mathcal{W}_k^1$ to be computed online using the current nominal trajectory and updated parameter set estimate. More generally $\mathcal{W}_k^1$ may be determined offline as a global bound over $\mathcal{X}$, $\mathcal{U}$, and $\Theta_0$.

\section{Ellipsoidal tube MPC}\label{sec:tube_conditions}
From~(\ref{eq:taylor}), the state perturbation $s_k$ evolves according to
\begin{equation}
s_{k+1} = \Phi_k s_k + B_k v_k + w_k + \delta^0_k + \delta^1_k
\end{equation}
where $w\in \mathcal{W}$, $\delta^0_k\in\mathcal{W}^0_k$ and $\delta^1_k\in\mathcal{W}^1_k$.
Using a conventional tube MPC strategy (e.g.~\cite{Kou16}) we split $s_k$ into nominal and uncertain components $z_k$ and $e_k$ satisfying
\begin{align}
s_k &= z_k + e_k \label{eq:decomp} \\
z_{k+1} &= \Phi_k z_k + B_kv_k \label{eq:z_dynamics}\\
e_{k+1} &= \Phi_k e_k + w_k + \delta^0_k + \delta^1_k
\label{eq:e_dynamics}
\end{align} 
for $k = 0,\ldots,N-1$, and we construct a sequence of ellipsoidal sets bounding the uncertain component $e_k$,
\begin{equation}\label{eq:tube_cond1}
e_k \in \mathcal{E}(V, \beta_k^2), 
\quad k = 0,\ldots,N 
\end{equation}
where $\mathcal{E}(V, \beta^2)= \{e: e^\top V e\leq \beta^2\}$.
The tube geometry is determined by the matrix $V\succ 0$, which is computed offline as discussed in Section~\ref{sec:termset}. The scalings $\beta_k$ are optimized online.

To enforce~(\ref{eq:tube_cond1}), we require, for all $k=0,\ldots,N-1$
\begin{align*}
& \mathcal{E}(V, \beta_{k+1}^2) \ni \Phi_k e + w + \delta^0 + \delta^1 , \\ 
& \qquad \forall w \in \mathcal{W}, 
\ \forall \delta^0 \in \mathcal{W}^0_k, 
\ \forall \delta^1 \in \mathcal{W}^1_k, 
\ \forall e\in \mathcal{E}(V, \beta_k^2) .
\end{align*}
A sufficient condition for this is given by, for all $j\in\mathbb{N}_{\nu_{1}}$, $q\in\mathbb{N}_{\nu_\theta}$, $r\in\mathbb{N}_{\nu_w}$, and all $e\in\mathcal{E}(V,\beta_k^2)$,
\begin{equation}\label{eq:tube_mem_cond_beta} 
\beta_{k+1} \geq 
\lVert C^{(j)}_k z_k + D^{(j)}_k v_k + \delta^{0\,(q)}_k \rVert_V + \lVert (\Phi_k + C^{(j)}_k)e + w^{(r)}\rVert_V .
\end{equation}
We enforce~(\ref{eq:tube_mem_cond_beta}) using the following observation.

\begin{lemma}\label{lem:beta_mode1_dynamics}
Condition (\ref{eq:tube_mem_cond_beta}) holds for all $e\in\mathcal{E}(V,\beta_k^2)$ if
\begin{equation}
\beta_{k+1} \geq (\lambda_k \beta_k^2 + \sigma^2)^{\frac{1}{2}} + \lVert C^{(j)}_k z_k + D^{(j)}_k v_k + \delta^{0\,(q)}_k \rVert_V
\label{eq:beta_mode1_dynamics}
\end{equation}
for all $j\in\mathbb{N}_{\nu_{1}}$, $q\in\mathbb{N}_{\nu_\theta}$, with $\lambda_k$ defined by
\begin{equation}
\lambda_k = \max_{j\in\mathbb{N}_{\nu_1},\, r\in\mathbb{N}_{\nu_w}} \lVert
(\Phi_k + C_k^{(j)})V^{-\frac{1}{2}} \rVert_{\Psi^{(r)}}^2
\label{eq:lambda_mode1_def}
\end{equation}
where $\Psi^{(r)} = (V^{-1} - w^{(r)} w^{(r)\, \top} \sigma^{-2})^{-1}$ and where $\sigma$ is a constant whose design is discussed in Section~\ref{sec:termset}.
\end{lemma}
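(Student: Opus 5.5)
The plan is to reduce the claim to one quadratic inequality. Write $A_j = \Phi_k + C^{(j)}_k$, and read $\lVert M\rVert_{\Psi}^2$ as the induced quantity $\max_{y\neq 0} \lVert My\rVert_\Psi^2/\lVert y\rVert^2 = \lambda_{\max}(M^\top \Psi M)$. Comparing~(\ref{eq:beta_mode1_dynamics}) with~(\ref{eq:tube_mem_cond_beta}), the first term of~(\ref{eq:tube_mem_cond_beta}) appears unchanged in~(\ref{eq:beta_mode1_dynamics}). It therefore suffices to show, for every $j\in\mathbb{N}_{\nu_1}$, every $r\in\mathbb{N}_{\nu_w}$ and every $e\in\mathcal{E}(V,\beta_k^2)$, that
\[
\lVert A_j e + w^{(r)}\rVert_V^2 \le \lambda_k \beta_k^2 + \sigma^2 .
\]
Taking square roots and adding the common first term then gives~(\ref{eq:tube_mem_cond_beta}) from~(\ref{eq:beta_mode1_dynamics}).

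The key step is a weighted Young-type inequality. For a fixed vertex $w=w^{(r)}$ I claim
\[
\lVert a + w\rVert_V^2 \le \lVert a\rVert_{\Psi^{(r)}}^2 + \sigma^2 \quad \text{for all } a .
\]
This needs $\Psi^{(r)}$ to be well defined and positive definite, which holds if $w^{(r)\top} V w^{(r)} < \sigma^2$. I will assume this as part of the design of $\sigma$ referred to in Section~\ref{sec:termset}; by a Schur complement argument it is exactly the condition $V^{-1} - w^{(r)}w^{(r)\top}\sigma^{-2}\succ 0$.

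Expanding the claimed inequality shows it is equivalent to nonnegativity, for all $a$, of the quadratic form
\[
a^\top(\Psi^{(r)}-V)a - 2a^\top V w + \sigma^2 - w^\top V w ,
\]
that is, to positive semidefiniteness of the block matrix
$\begin{bmatrix} \Psi^{(r)}-V & -Vw \\ -w^\top V & \sigma^2 - w^\top V w\end{bmatrix}$.
The scalar block $\sigma^2 - w^\top V w$ is positive, so by a Schur complement this reduces to
\[
\Psi^{(r)} - V - \frac{Vww^\top V}{\sigma^2 - w^\top V w} \succeq 0 .
\]
The Sherman--Morrison formula gives $(V^{-1} - ww^\top\sigma^{-2})^{-1} = V + Vww^\top V/(\sigma^2 - w^\top V w)$. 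Hence this matrix is exactly zero, and the inequality holds, tightly.

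To finish, set $a = A_j e$ and $y = V^{1/2}e$, so that $\lVert y\rVert\le\beta_k$. Then
\[
\lVert A_j e\rVert_{\Psi^{(r)}}^2 = \lVert A_j V^{-1/2} y\rVert_{\Psi^{(r)}}^2 \le \lVert A_jV^{-1/2}\rVert_{\Psi^{(r)}}^2 \beta_k^2 \le \lambda_k\beta_k^2 ,
\]
where the last step uses the maximum over $j$ and $r$ in~(\ref{eq:lambda_mode1_def}). Combining this with the key inequality gives the reduced claim.

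The main obstacle is the Sherman--Morrison/Schur-complement identity, together with making explicit the implicit assumption $w^{(r)\top}Vw^{(r)}<\sigma^2$. Everything else is direct substitution and the triangle inequality already built into~(\ref{eq:tube_mem_cond_beta}).
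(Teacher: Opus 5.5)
Your proof is correct and takes essentially the same route as the paper. The paper encodes the bound as the LMI~(\ref{eq:lambda_mode1_cond}); its Schur complements on the $V^{-1}$ and $\sigma^2$ blocks give, respectively, $\|(\Phi_k+C^{(j)}_k)e+w^{(r)}\|_V^2\le\lambda_k e^\top Ve+\sigma^2$ and $\lambda_k V\succeq(\Phi_k+C^{(j)}_k)^\top\Psi^{(r)}(\Phi_k+C^{(j)}_k)$, and your Sherman--Morrison identity $\|a+w^{(r)}\|_V^2\le\|a\|_{\Psi^{(r)}}^2+\sigma^2$ is exactly the step linking these two forms, written out explicitly. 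Making $w^{(r)\top}Vw^{(r)}<\sigma^2$ explicit is right: the paper needs it tacitly for $\Psi^{(r)}$ to exist and be positive definite, and the principal submatrix $\left[\begin{smallmatrix}\tau & w^{(r)\top}\\ w^{(r)} & S\end{smallmatrix}\right]\succeq 0$ of~(\ref{eq:cost_lmi}) supplies it in non-strict form.
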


\begin{proof}
Let $\mu^{(j,q)} = \beta_{k+1} - \lVert C^{(j)}_k z_k + D^{(j)}_k v_k + \delta^{0\,(q)}_k \rVert_V$, then~(\ref{eq:tube_mem_cond_beta}) holds if and only $\lambda_k^{(j,r)} \geq 0$ exists satisfying%
\def\strut{\rule{0pt}{8pt}}%
\begin{equation}\label{eq:lambda_mode1_cond}
\left[\begin{smallmatrix}
\lambda_k^{(j,r)} V & ~~~0~~~ & (\Phi_k + C^{(j)}_k)^\top 
\\
\star\strut & \sigma^2 & {w^{(r)}}^\top
\\
\star\strut & \star & V^{-1}
\end{smallmatrix}\right] \succeq 0 ,
\end{equation}
and ${\mu^{(j,q)}}^2 \geq  \lambda_k^{(j,r)}\beta_{k}^2 + \sigma^2$.
By Schur complements,~(\ref{eq:lambda_mode1_cond}) holds iff $\lambda_k^{(j,r)} V \succeq (\Phi_k + C^{(j)}_k)^\top 
\Psi^{(r)}(\Phi_k + C^{(j)}_k)$.
Choosing $\lambda_k$ as the smallest scalar satisfying $\lambda_k \geq \lambda_k^{(j,r)}$ therefore yields the sufficient conditions~{(\ref{eq:beta_mode1_dynamics}) and
(\ref{eq:lambda_mode1_def})}.
\end{proof} 

We determine the MPC law by minimizing a bound on the maximum over the ellipsoidal tube of the predicted cost:
\begin{multline}\label{eq:maxcost}
\sum_{k=0}^{N-1} \max_{x_k \in x^0_k + z_k + \mathcal{E}(V,\beta_k^2)}
  ( \lVert x_k \rVert^2_Q +  \lVert Kx_k +  v^0_k+ v_k \rVert^2_R) \\
  + \hat{l}^2(z_N,\beta_N,x^0_N)
\end{multline}
where $\hat{l}$ is a terminal cost whose design is discussed in Section~\ref{sec:termset}.
This minimization is performed over the variables $\mathbf{v}=\{v_0,\ldots,v_{N-1}\}$, $\boldsymbol{\beta} = \{\beta_0,\ldots,\beta_N\}$, $\mathbf{z} =\{z_0,\ldots,z_N\}$, 
subject to input, state, and terminal constraints, and constraints to enforce a sufficient cost decrease, as described in Section~\ref{sec:opt}.

\section{Online MPC optimization}\label{sec:opt}

At discrete time $t$, iteration $i$ of the MPC optimization solves the following Second Order Cone Program (SOCP) given the current plant state $x^p_t$.
\begin{align} \label{opt:mpc}%
& (\mathbf{v}^\star,\boldsymbol{\beta}^\star,\mathbf{z}^\star,\mathbf{l}^\star, \bar{J}_t^{(i)}) = 
\arg\min_{\mathbf{v},\boldsymbol{\beta},\mathbf{z},\mathbf{l},\bar{J}_t^{(i)}}\bar{J}_t^{(i)}
\\ \nonumber
& \text{subject to, for $k=0,...,N-1$, and all $j\in\mathbb{N}_{\nu_{1}}$, $q\in\mathbb{N}_{\nu_\theta}$, 
}\\  \nonumber
& \quad \begin{aligned}
\bar{J}_t^{(i)} & \geq \lVert [ l_0 \ l_1 \ \cdots \ l_{N-1} \ l_{N} ] \rVert^2 \\
z_{k+1} & = \Phi_k z_k + B_kv_k \\
\beta_{k+1} & \geq 
(\lambda_k\beta_k^2 + \sigma^2)^\frac{1}{2} + 
\lVert C^{(j)}_k z_k + D^{(j)}_k v_k + \delta^{0\,(q)}_k \rVert_V \\
l_{k} & \geq \bigl( \lVert x^0_k +z_k\rVert_Q^2 
+ \lVert K(x^0_k + z_k) + v^0_k + v_k\rVert_R^2  \bigr)^{\frac{1}{2}}
\\
& \quad + \beta_k \lVert{V}^{-\frac{1}{2}}\rVert_{Q+K^\top R K}
\\
\mathcal{U} & \supset K\bigl(x_k^0+z_k+\mathcal{E}(V,\beta_k^2)\bigr) + v_k^0 + v_k \\
\mathcal{X} & \supset x_k^0+z_k+\mathcal{E}(V,\beta_k^2) \\
\mathcal{V} & \ni v_k^0 + v_k \\
\mathcal{S} & \supset z_k+\mathcal{E}(V,\beta_k^2) \\
\end{aligned} \\ \nonumber
& \text{and initial and terminal conditions} \\ \nonumber
& \quad \begin{aligned}
\beta_0 & \geq \lVert x^0_0 + z_0 - x^p_t\rVert_V \\
\Omega(x^0_N) & \ni (\|z_N\|_V, \beta_N) \\
l_N &\geq \hat{l}(z_N, \beta_N, x^0_N )
\end{aligned} \\ \nonumber
& \text{and, for iteration $i=1$,} \\ \nonumber
& \quad \bar{J}^{(i)}_t \leq \bar{J}^{(\mathit{final})}_{t-1} - \bigl(\lVert x_{t-1} \rVert^2_Q +  \lVert u_{t-1} \rVert^2_R \bigr) + \hat{\sigma}^2 \\ \nonumber
&\text{and, for iterations $i>1$,} \\  \nonumber
& \quad \bar{J}^{(i)}_t \leq \bar{J}^{(i-1)}_t .
\end{align}
Here, the constraints on $z_{k+1}$ and $\beta_{k+1}$ ensure that $x_k^0+z_k+\mathcal{E}(V,\beta_k^2)$ bounds the predicted state trajectories, while $l_k^2$ bounds the $k$th term in the cost~(\ref{eq:maxcost}) via the triangle inequality.
Constraints of the form $\{x : H x \leq h\} \supset z + \mathcal{E}(V,\beta)$ enforcing the control, state and tube bounds are imposed via $[H]_r z + \beta \|V^{-\frac{1}{2}}[H]_r^\top\| \leq [h]_r$ for each row $r$ of $H$.
The design of the terminal set $\Omega(x^0_N)$, terminal cost $\hat{l}$, and $\hat{\sigma}$ are discussed in Section~\ref{sec:termset}, and $\bar{J}_{t}^{(i-1)}$, $\bar{J}_{t-1}^{(\mathit{final})}$ denote the optimal objective respectively at iteration $i-1$ and at the final iteration at time $t-1$.
The MPC strategy is summarised in Algorithm~\ref{alg:mpc}.

{\setlength{\algomargin}{0.83em}
\removelatexerror
\begin{algorithm2e}[hbt]
\DontPrintSemicolon
\SetKwInOut{Input}{Input}
\SetKwInOut{Output}{Output}
\Input{Initial perturbation sequence $\mathbf{v}^0$;
parameter estimate $\theta^0$; 
uncertainty bounds $\mathcal{W},\Theta$; 
cost weights $Q,R$;
tube parameters $\mathcal{S},\mathcal{V},V,\sigma$}
\Output{Control input $u_t$ at time steps $t = 0,1,\ldots$}
At time $t$, set $x^0_0 \gets x^p_t$, $\iter \gets 1$\;
\While{$\iter \leq \maxiter$ and $\|{\mathbf{v}^{\star}}\|\geq \algtol$}{
Compute $\mathbf{x}^0$ by simulating the nominal system 
(\ref{eq:nominal_system}) with initial state $x^0_0$ and perturbation sequence $\mathbf{v}^0$\;
Compute $\Phi_k,B_k$ in (\ref{eq:taylor}), bounds $\mathcal{W}^0_k, \mathcal{W}^1_k$, in (\ref{eq:W0_def}), (\ref{eq:W1_def}) and $\lambda_k$ in (\ref{eq:lambda_mode1_def}) for all $k\in \mathbb{N}_{[0,N-1]}$\; 
Attempt to solve Problem~{\rm(\ref{opt:mpc})} for $\mathbf{v}^{\star}$\; 
\If{Problem~{\rm(\ref{opt:mpc})} is infeasible}{
$\alpha \gets 1$, $\infeasflag \gets \algtrue$, $\lsiter \gets 1$\;
\While{$\lsiter \leq \maxlsiter$ and $\infeasflag = \algtrue$}{
$\alpha \gets \alpha/2$\;
\If{$\iter = 1$}{
$x^0_0 \gets x^0_{0,\old} + \alpha (x^0_0 -x^0_{0,\old})$\; 
}
$\mathbf{v}^0\gets \mathbf{v}^0_{\old} + \alpha (\mathbf{v}^0 -\mathbf{v}^0_{\old})$, $\lsiter \gets \lsiter + 1$\; 
Compute $\mathbf{x}^0$ using 
(\ref{eq:nominal_system}) with $x^0_0$ and $\mathbf{v}^0$\;
Compute $\Phi_k, B_k, \mathcal{W}^0_k, \mathcal{W}^1_k, \lambda_k, \forall k\in \mathbb{N}_{[0,N-1]}$\!\!\!\!\!\; 
Attempt to solve Problem~{\rm(\ref{opt:mpc})} for $\mathbf{v}^{\star}$\; 
\If{Problem~{\rm(\ref{opt:mpc})} is feasible}{
$\infeasflag \gets \algfalse$\;
}
}
\If{$\infeasflag = \algtrue$}{
$\mathbf{v}^{\star}\gets 0$, 
$\mathbf{v}^0 \gets \mathbf{v}^0_{\old}$, $i\gets \maxiter$\;
\If{$\iter = 1$}{
$\mathbf{x}^0 \gets \mathbf{x}^0_{\old}$\; 
}
}
}
Store $\mathbf{v}^0_{\old} \gets \mathbf{v}^0$\;
Update $\mathbf{v}^0 \gets \mathbf{v}^0 +\mathbf{v}^{\star}$, $\iter\gets \iter+1$\; 
}
Apply the control input $u_t= K x^p_t+ v^0_0$ and set
$\mathbf{v}^0 \gets \{v^0_1, \ldots, v^0_{N-1},0\}$
$\mathbf{v}^0_{\old} \gets \{v^0_{1,\old}, \ldots, v^0_{N-1,\old},0\}$
$\mathbf{x}^0_{\old} \gets \{x^0_{1},\ldots,x^0_{N},f_K(x_{N}^0,0,\theta^0)\}$\; 
\caption{Ellipsoidal Tube MPC}\label{alg:mpc}
\end{algorithm2e}}

The main iteration of Algorithm~\ref{alg:mpc} (lines 2-23) computes the Jacobian linear approximation~(\ref{eq:taylor}) of the model (\ref{eq:system}) about the nominal trajectory generated by $\mathbf{v}^0$ (lines 3-4), and attempts to solve Problem~(\ref{opt:mpc}) (line 5).
If the solver is capable of warm-starts, Problem~(\ref{opt:mpc}) can be initialised with ${\bf v} =0$ to leverage the solution of the previous iteration.
Although the constraints of Problem~(\ref{opt:mpc}) ensure that 
the nominal trajectory $\mathbf{x}^0$
satisfies $(x_k^0,u_k^0) \in (\mathcal{X},\mathcal{U})$ $\forall k\in \mathbb{N}_{[0,N-1]}$ and $x_{N}^0 \in \mathcal{X}_N$, the linearized dynamics computed in line 4 may not define a feasible constraint set for Problem~(\ref{opt:mpc}). We therefore perform a backtracking line search in lines~8-21,  exploiting knowledge of a perturbation $\mathbf{v}^0_{\mathrm{old}}$ and initial nominal state $x^0_{0,\mathrm{old}}$ such that Problem~(\ref{opt:mpc}) is feasible. As we show in Section~\ref{sec:stability}, this provides a guarantee of feasibility of Problem~(\ref{opt:mpc}) for all~$t$.

\section{Terminal constraints and terminal cost} \label{sec:termset}
This section discusses how to compute the parameters $V,K,\sigma,\hat{\sigma}$ in problem~\eqref{opt:mpc}
and how to construct the terminal set $\Omega$ and terminal cost $\hat{l}(z,\beta,x^0)$.
In order to design a terminal cost and constraint set providing recursive feasibility and stability guarantees, we define the predicted trajectories of the model~(\ref{eq:system}) beyond the initial $N$-step prediction horizon by setting $v_k = 0$, $v^0_k = 0$ and $u_k=K x_k$ for  $k\geq N$.
To allow robust linear control design methods, we construct a linear difference inclusion (LDI) (e.g.~\cite{Boy94}) for each basis function $f_i$ in~($\ref{eq:theta_expansion}$) in a neighbourhood of $(x,u)=(0,0)$. Considering all combinations of individual LDIs over the parameter set $\Theta_0$ yields an aggregate LDI. 
For all $\theta \in \Theta_0$, 
$x\in\hat{\mathcal{X}}$, $u\in\hat{\mathcal{U}}$,
for given polytopic sets $\hat{\mathcal{X}}$, $\hat{\mathcal{U}}$ containing the origin, let
\[
[\begin{matrix}
\nabla_x f(x, u, \theta) & \nabla_u f(x,u,\theta)\end{matrix}] 
\in \mathrm{co} \bigl\{ [\begin{matrix}\hat{A}^{(j)}  & \hat{B}^{(j)} \end{matrix}], \, j \in \mathbb{N}_{\hat{\nu}} \bigr\} ,
\]
then, for all $(x_k,u_k)\in\hat{\mathcal{X}}\times\hat{\mathcal{U}}$ we have
\begin{equation}
f(x_k,u_k,\theta) \in\mathrm{co}\{\hat{A}^{(j)} x_k + \hat{B}^{(j)} u_k, \, j \in \mathbb{N}_{\hat{\nu}} \} .
\label{eq:ldi_term}
\end{equation} 
To simplify notation, let $\bar{\mathcal{X}} = \hat{\mathcal{X}}\cap\{x: Kx \in \mathcal{\hat{U}}\}$.

\begin{remark}
The LDI~(\ref{eq:ldi_term}) can be computed using the vertices of $\Theta_0$ and bounds on the Jacobians of the basis functions $f_i$ in (\ref{eq:theta_expansion}), analogously to the bounds on $\delta_k^1$ in Section~\ref{sec:error_bounds}.
\end{remark}

\begin{lemma} \label{lem:cost_bound}
For all $x\in\bar{\mathcal{X}}$
the inequality
\begin{equation} \label{eq:cost_bound}
\lVert x \rVert^2_V - \lVert f(x,Kx,\theta) +w \rVert^2_V
\geq \lVert x \rVert^2_Q +  \lVert K x  \rVert^2_R - \sigma^2
\end{equation}
holds for all $\theta\in \Theta_0$ and all $w\in\mathcal{W}$ for positive definite $V$ and some scalar $\sigma$ if the following Linear Matrix Inequality (LMI) holds for all $j\in \mathbb{N}_{\hat{\nu}}$ and $ r \in \mathbb{N}_{\nu_w}$:%
\def\strut{\rule{0pt}{8pt}}%
\begin{equation} \label{eq:cost_lmi}
\left[\begin{smallmatrix}
S & ~~~0~~~ & (\hat{A}^{(j)}S + \hat{B}^{(j)}Y)^\top & S & ~~Y^\top \\
\star & \tau & {w^{(r)}}^\top & 0 & 0 \\
\star\strut & \star & S & 0 & 0 \\
\star\strut & \star & \star & Q^{-1} & 0 \\
\star\strut & \star & \star & \star & ~~R^{-1}
\end{smallmatrix}\right] \succeq 0
\end{equation} with $V=S^{-1}$, $\sigma^2=\tau$, and $K =YV$.
\end{lemma}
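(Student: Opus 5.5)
Starting from the LMI~(\ref{eq:cost_lmi}), I will use Schur complements to obtain a matrix inequality in the original variables $V$, $K$, $\sigma$. That inequality should be a quadratic-form condition in the augmented vector $(x,1)$, namely
\[
\begin{bmatrix} x\\ 1\end{bmatrix}^{\!\top}\!\left(\begin{bmatrix} V - Q - K^\top R K & 0\\ 0 & \sigma^2\end{bmatrix} - \begin{bmatrix} A_K^{(j)\top} \\ w^{(r)\top}\end{bmatrix} V \begin{bmatrix} A_K^{(j)} & w^{(r)}\end{bmatrix}\right)\!\begin{bmatrix} x\\ 1\end{bmatrix}\ge 0 ,
\]
with $A_K^{(j)}=\hat A^{(j)}+\hat B^{(j)}K$. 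This gives~(\ref{eq:cost_bound}) at the vertices. Convexity will then extend it to every $\theta\in\Theta_0$ and $w\in\mathcal{W}$.

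\textbf{Step 1: reduce the LMI.} I first take the Schur complement of~(\ref{eq:cost_lmi}) with respect to the lower-right block $\mathrm{diag}(S,Q^{-1},R^{-1})\succ0$. This gives
\[
\begin{bmatrix} S - (\hat A^{(j)}S+\hat B^{(j)}Y)^\top S^{-1}(\hat A^{(j)}S+\hat B^{(j)}Y) - SQS - Y^\top R Y & -(\hat A^{(j)}S+\hat B^{(j)}Y)^\top S^{-1} w^{(r)} \\ \star & \tau - w^{(r)\top}S^{-1}w^{(r)}\end{bmatrix}\succeq 0 .
\]
Next I substitute $S=V^{-1}$, $Y=KS$ (so that $K=YV$) and $\tau=\sigma^2$. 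The congruence $\mathrm{diag}(V,1)$ then turns this into
\[
M^{(j,r)}:=\begin{bmatrix} V - A_K^{(j)\top} V A_K^{(j)} - Q - K^\top R K & -A_K^{(j)\top}Vw^{(r)}\\ \star & \sigma^2 - w^{(r)\top}Vw^{(r)}\end{bmatrix}\succeq 0 .
\]
The off-diagonal sign is immaterial, since one may flip the sign of the second coordinate. Evaluating at $(x,-1)$ gives exactly
\[
\|x\|_V^2 - \|A_K^{(j)}x + w^{(r)}\|_V^2 \ge \|x\|_Q^2+\|Kx\|_R^2-\sigma^2
\]
for every vertex pair $(j,r)$.

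\textbf{Step 2: extend from vertices to all $\theta$ and $w$.} Take $x\in\bar{\mathcal X}$. Then $x\in\hat{\mathcal X}$ and $Kx\in\hat{\mathcal U}$, so~(\ref{eq:ldi_term}) gives
\[
f(x,Kx,\theta)=\sum_j \mu_j A_K^{(j)}x, \qquad \mu\in\text{simplex}.
\]
Also $w=\sum_r \eta_r w^{(r)}$ for some $\eta$ in the simplex. Hence $f(x,Kx,\theta)+w=\sum_{j,r}\mu_j\eta_r\,(A_K^{(j)}x+w^{(r)})$. The map $y\mapsto\|y\|_V^2$ is convex, so
\[
\|f(x,Kx,\theta)+w\|_V^2\le \max_{j,r}\|A_K^{(j)}x+w^{(r)}\|_V^2 .
\]
Combining this with Step~1 yields~(\ref{eq:cost_bound}).

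The main obstacle is the block bookkeeping in Step~1: carrying out the Schur complement with the correct ordering of blocks, and making sure that the congruence by $V$ produces $K=YV$ and the term $Y^\top R Y\mapsto K^\top RK$ consistently. The convexity argument in Step~2 is routine, but it needs $x\in\bar{\mathcal X}$, and that is precisely why the LDI applies to $u=Kx$.
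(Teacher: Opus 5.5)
Your proposal is correct and is essentially the paper's argument, written out in full: take Schur complements of (\ref{eq:cost_lmi}) to get the vertex inequalities in $V,K,\sigma$, then substitute the LDI (\ref{eq:ldi_term}) and use convexity of $\|\cdot\|_V^2$ to cover all $\theta\in\Theta_0$ and $w\in\mathcal{W}$. One harmless slip: because the off-diagonal block of $M^{(j,r)}$ is $-A_K^{(j)\top}Vw^{(r)}$, it is evaluation at $(x,1)$, not at $(x,-1)$, that gives $\|A_K^{(j)}x+w^{(r)}\|_V^2$ (exactly as in your opening display), and since $M^{(j,r)}\succeq 0$ the quadratic form is nonnegative at both points anyway.
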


\begin{proof}
This follows by substituting (\ref{eq:ldi_term}) into (\ref{eq:cost_bound})
and considering Schur complements of~(\ref{eq:cost_lmi}).
\end{proof}

To compute $V$, $\sigma$ and $K$ we minimize $\tau$ subject to (\ref{eq:cost_lmi}) by solving a semidefinite program. This approach is justified since (\ref{eq:cost_bound}) implies that $\sigma^2$ bounds the time-average value of the stage cost in (\ref{eq:cost}) as $t\to\infty$ under the control law $u_t=Kx_t$.

To determine the terminal constraint set for the tube parameters in Problem~(\ref{opt:mpc}) we first introduce the notation $\hat{Q} = Q+K^\top R K$, $\hat{\Phi}^{(j)} = \hat{A}^{(j)}+\hat{B}^{(j)}K$, $j\in\mathbb{N}_{\hat{\nu}}$, and define
\begin{align}
\hat{\lambda} &= 1 - \sigma_{\min} ( V^{-\frac{1}{2}} \hat{Q} V^{-\frac{1}{2}} )
\label{eq:lambda_term}
\\
d_\Theta &= \max_{\theta^0,\theta^1 \in\Theta_0} \| \theta^0 - \theta^1 \|_1
\label{eq:Theta_diam}
\\
d_{\hat{\Phi}} &= \max_{j,k\in\mathbb{N}_{\hat{\nu}}} \|
\hat{\Phi}^{(j)} - \hat{\Phi}^{(k)}\|_V .
\label{eq:Phi_diam}
\end{align}%
Here $\hat{\lambda}\in [0,1)$ since a Schur complement of (\ref{eq:cost_lmi}) implies $V\succeq \hat{Q}$.
We further assume $f_{K,i}(x,0)$ for each $i\in\{0,\ldots,p\}$ is $L$-Lipschitz continuous with respect to $x$, for all $x\in\bar{\mathcal{X}}$. 

\begin{lemma}\label{lem:term_tube}
If $v_k=v^0_k=0$
in~(\ref{eq:taylor}) and (\ref{eq:decomp})-(\ref{eq:e_dynamics}), and if 
$x^0_k\in\bar{\mathcal{X}}$ and $x_k \in x^0_k+z_k+\mathcal{E}(V,\beta_k^2)\subseteq\bar{\mathcal{X}}$,
then $e_k \in \mathcal{E}(V,\beta_k^2)$ and
\begin{align}
\|x^0_{k+1}\|_V &\leq \hat{\lambda}^{\frac{1}{2}} \|x^0_k\|_V 
\label{eq:x0_term_bound}
\\
\|z_{k+1}\|_V &\leq \hat{\lambda}^{\frac{i}{2}} \|z_k\|_V 
\label{eq:z_term_bound}
\\
\beta_{k+1} &\geq (\hat{\lambda} \beta_{k}^2 +\sigma^2)^\frac{1}{2} + d_{\hat{\Phi}} \|z_{k}\|_V + 
d_\Theta L \|x_k^0\|_V .
\label{eq:beta_term_bound}
\end{align}
\end{lemma}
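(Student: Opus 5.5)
The plan is to derive everything from the Schur complement structure of the LMI~(\ref{eq:cost_lmi}) together with the LDI~(\ref{eq:ldi_term}). I read the exponent $\frac{i}{2}$ in~(\ref{eq:z_term_bound}) as a typo for $\frac{1}{2}$.

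\emph{Vertex bound.} Take Schur complements of~(\ref{eq:cost_lmi}) with $V=S^{-1}$, $K=YV$. For every $j\in\mathbb{N}_{\hat\nu}$, $r\in\mathbb{N}_{\nu_w}$ and every $e$, this should give
\[
\|\hat\Phi^{(j)} e + w^{(r)}\|_V^2 \leq \|e\|_V^2 - \|e\|_{\hat Q}^2 + \sigma^2 .
\]
This is the homogenized form of Lemma~\ref{lem:cost_bound}: the quadratic form in $(e,1)$ defined by the LMI. Dropping the $w$ row gives the special case $\hat\Phi^{(j)\top} V\hat\Phi^{(j)} \preceq V-\hat Q$. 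The definition~(\ref{eq:lambda_term}) of $\hat\lambda$ gives $\hat Q\succeq(1-\hat\lambda)V$, so $\|\hat\Phi^{(j)}e\|_V\leq\hat\lambda^{1/2}\|e\|_V$ for every vertex.

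\emph{Nominal and $z$ bounds.} Since $f(0,0,\theta)=0$ and $x^0_k\in\bar{\mathcal{X}}$, the LDI gives $x^0_{k+1}=f_K(x^0_k,0,\theta^0)\in\mathrm{co}\{\hat\Phi^{(j)}x^0_k\}$. Convexity of $\|\cdot\|_V$ then yields~(\ref{eq:x0_term_bound}). Similarly $\Phi_k=\nabla_x f_K(x^0_k,0,\theta^0)$ lies in $\mathrm{co}\{\hat\Phi^{(j)}\}$, and $z_{k+1}=\Phi_k z_k$ because $v_k=0$, which gives~(\ref{eq:z_term_bound}).

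\emph{Bound on $e_{k+1}$.} Write $e_{k+1}=\Phi_k e_k+\delta^1_k+\delta^0_k+w_k$ and treat each part separately.
\begin{itemize}
\item Linear part and $\delta^1_k$. By the mean value theorem, applied row-wise on the segment from $x^0_k$ to $x_k$ (both in $\bar{\mathcal{X}}$, a convex polytope), $f_K(x^0_k+s_k,0,\theta)-f_K(x^0_k,0,\theta)=M s_k$ with $M\in\mathrm{co}\{\hat\Phi^{(j)}\}$. Hence
\[
\Phi_k e_k+\delta^1_k = M e_k + (M-\Phi_k) z_k .
\]
Both $M$ and $\Phi_k$ are convex combinations of the $\hat\Phi^{(j)}$, so $M-\Phi_k$ is a convex combination of the differences $\hat\Phi^{(j)}-\hat\Phi^{(l)}$. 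Therefore $\|(M-\Phi_k)z_k\|_V\leq d_{\hat\Phi}\|z_k\|_V$.
\item $\delta^0_k$. Using affinity in $\theta$,
\[
\delta^0_k=\sum_i(\theta_i-\theta^0_i)f_{K,i}(x^0_k,0).
\]
Lipschitz continuity with $f_{K,i}(0,0)=0$ gives $\|f_{K,i}(x^0_k,0)\|_V\leq L\|x^0_k\|_V$, so $\|\delta^0_k\|_V\leq d_\Theta L\|x^0_k\|_V$.
\item Remaining term. The map $(M,w)\mapsto\|Me+w\|_V$ is convex, so the vertex bound extends to the convex hull: $\|Me_k+w_k\|_V\leq(\|e_k\|_V^2-\|e_k\|_{\hat Q}^2+\sigma^2)^{1/2}\leq(\hat\lambda\beta_k^2+\sigma^2)^{1/2}$ whenever $e_k\in\mathcal{E}(V,\beta_k^2)$.
\end{itemize}
Combining the three parts with the triangle inequality shows that any $\beta_{k+1}$ satisfying~(\ref{eq:beta_term_bound}) gives $e_{k+1}\in\mathcal{E}(V,\beta_{k+1}^2)$. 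By induction from the initial tube condition, $e_k\in\mathcal{E}(V,\beta_k^2)$ for all $k$.

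\emph{Main obstacle.} The delicate step is the mean-value and LDI representation. It needs every evaluated point ($x^0_k$, $x_k$ and the segment between them) to lie in $\bar{\mathcal{X}}$, which is exactly what the hypotheses supply. It also needs care because $M$ depends on the true $\theta$ while $\Phi_k$ depends on $\theta^0$. I would handle this by noting that the LDI vertices are taken over all of $\Theta_0$, so both matrices lie in the same hull. This is what makes the single constant $d_{\hat\Phi}$ suffice.
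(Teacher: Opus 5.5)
Your proposal is correct and follows essentially the paper's proof. Schur complements of~(\ref{eq:cost_lmi}) give $\|\hat{\Phi}^{(j)}x+w\|_V^2\leq\hat{\lambda}\|x\|_V^2+\sigma^2$, and the LDI~(\ref{eq:ldi_term}) gives the contractions of $x^0$ and $z$. Then $e_{k+1}$ is split into a contracting part, a $(\hat{\Phi}^{(j)}-\Phi_k)z_k$ term bounded by $d_{\hat{\Phi}}\|z_k\|_V$, and $\delta^0_k$ bounded by $d_\Theta L\|x^0_k\|_V$; your single matrix $M$ is a tidier version of the paper's Minkowski sum $\mathrm{co}\{\hat{\Phi}^{(j)}e_k\}+\mathrm{co}\{(\hat{\Phi}^{(j)}-\Phi_k)z_k\}$.

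One small repair is needed. A row-wise mean value theorem evaluates different rows at different points, and the resulting matrix need not lie in $\mathrm{co}\{\hat{\Phi}^{(j)}\}$. Instead, take $M=\int_0^1\nabla_x f_K(x^0_k+\tau s_k,0,\theta)\,d\tau$, or apply the scalar mean value theorem to $c^\top f_K$ for every direction $c$; either places the increment in $\mathrm{co}\{\hat{\Phi}^{(j)}s_k\}$, exactly as the LDI is used in the paper.
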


\begin{proof}
From (\ref{eq:ldi_term}) with $u_k=Kx_k$ and $x^0_k,z_k,e_k\in \bar{\mathcal{X}}$ we get
\begin{align*}
x^0_{k+1} &= f_K(x^0_k , 0, \theta^0)
\in\mathrm{co}\{\hat{\Phi}^{(j)} x^0_k , \, j \in \mathbb{N}_{\hat{\nu}} \} 
\\
z_{k+1} &= \Phi_k z_k 
\in\mathrm{co}\{\hat{\Phi}^{(j)} z_k , \, j \in \mathbb{N}_{\hat{\nu}} \} 
\\
e_{k+1} &= \Phi_k e_k + w_k + \delta^0_k + \delta^1_k 
\\
&\in\mathrm{co}\{\hat{\Phi}^{(j)} e_k , \, j \in \mathbb{N}_{\hat{\nu}} \} + w_k + \mathrm{co}\{ (\hat{\Phi}^{(j)} \!-\! \Phi_k) z_k \} + \delta^0_k 
\end{align*}
but (\ref{eq:cost_lmi}) ensures that, for all  $w\in\mathcal{W}$,  $j \in \mathbb{N}_{\hat{\nu}}$, and all $x\in\mathbb{R}^{n_x}$,
\begin{align*}
\|\hat{\Phi}^{(j)} x\|_V^2 
&\leq \|x\|_V^2 - \|x\|_{\hat{Q}}^2 \leq \hat{\lambda} \|x\|_V^2
\\
\|\hat{\Phi}^{(j)} x + w\|_V^2 
&\leq \|x\|_V^2 - \|x\|_{\hat{Q}}^2 + \sigma^2 \leq \hat{\lambda} \|x\|_V^2 + \sigma^2
\end{align*}
since (\ref{eq:lambda_term}) implies $V - \hat{Q} \preceq \hat{\lambda} V$. 
Furthermore, for all $j\in\mathbb{N}_{\hat{\nu}}$,
\[
\|\mathrm{co}\{ (\hat{\Phi}^{(j)} - \Phi_k) z_k \}\|_V 
\leq 
d_{\hat{\Phi}} \| z_k\|_V ,
\]
and $\delta^0_k = f_K(x_k^0,0,\theta - \theta^0)$ satisfies
\[
\|\delta^0_k\|_V \leq L \|x_k^0\|_V \|\theta \!-\! \theta^0\|_1
\leq 
d_{\Theta} L \| x^0_k\|_V .
\]
Hence $x^0_k$, $z_k$ satisfy (\ref{eq:x0_term_bound}), (\ref{eq:z_term_bound}), and $e_k$ satisfies  
\[
\|e_{k+1}\|_V \leq 
(\hat{\lambda} \|e_{k}\|_V^2 +\sigma^2)^\frac{1}{2}
+ d_{\hat{\Phi}} \|z_{k}\|_V + 
d_\Theta L \|x_k^0\|_V ,
\]
from which the bound~(\ref{eq:beta_term_bound}) follows.
\end{proof}

The terminal set $\Omega(x^0_N)$ is constructed so that 
$Hx_k \leq h$ for all $k\geq N$ whenever $(\|z_N\|_V,\beta_N) \in\Omega(x^0_N)$, where
\[
\{x : Hx \leq h\}
 = \mathcal{X} \cap \hat{\mathcal{X}} \cap \{x : Kx \in \mathcal{U} \cap \hat{\mathcal{U}}\} 
\]
is the aggregate constraint set.
To ensure this we impose the condition $\|x^0_k + z_k\|_V + \beta_k \leq \hat{\rho}$ for all $k\geq N$, where 
\begin{equation}\label{eq:rho_def}
\hat{\rho} = \min_i \bigl\{ [h]_i/\|[H]_i^\top\|_{V^{-1}}\bigr\} ,
\end{equation}
by defining the terminal constraint set for $(\|z_N\|_V, \beta_N)$ as%
\begin{align}
& \Omega(x^0_N) = \bigl\{
(r,\beta_N) : 
\beta_N \leq \hat{\rho} - (r + \|x^0_N\|_V)
\nonumber\\
& \quad \text{and } \exists \beta_{k} \text{ satisfying, for }
k=N+1,\ldots, N+\hat{N}, 
\nonumber\\
& \quad \beta_{k} \geq 
(\hat{\lambda}\beta_{k-1}^2 + \sigma^2)^{\frac{1}{2}}
+ \hat{\lambda}^{\frac{(k-N-1)}{2}} (r d_{\hat{\Phi}} +  d_\Theta L \|x^0_N\|_{V}) ,
\nonumber\\
& \quad \beta_{k}
\leq \hat{\rho} - \hat{\lambda}^\frac{k-N}{2}(r + \|x^0_N\|_V) \bigr\}
\label{eq:term_constraint_set}
\end{align}
with $\hat{N}$ chosen large enough to satisfy 
\begin{align}
\max_{(r,\beta_N)\in\Omega(x^0_N)} & \bigl\{
(\hat{\lambda}\beta_{N+\hat{N}\!}^2 + \sigma^2)^{\frac{1}{2}\!} 
+ \hat{\lambda}^\frac{\hat{N}}{2}\! (r d_{\hat{\Phi}} +  d_\Theta L \|x^0_N\|_V) 
\nonumber \\
&\quad + \hat{\lambda}^\frac{\hat{N}+1}{2} (r +  \|x_k^0\|_V \bigr\} 
\leq \hat{\rho}
.
\label{eq:Nc_def}
\end{align}

\begin{lemma}\label{lem:termset}
If $(\|z_N\|_V,\beta_N)\in\Omega(x^0_N)$, then $Hx_N \leq h$ 
and
$(\|z_{k}\|_V, \beta_{k})\in \Omega(x^0_k)$ $\forall k>N$
if $\beta_k$ is equal to the rhs of~(\ref{eq:beta_term_bound}).
\end{lemma}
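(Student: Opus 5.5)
The plan has two parts: first show that $x_N$ satisfies the aggregate constraints, then show that membership in $\Omega$ is passed from step $k$ to step $k+1$ (a shift argument), and finally conclude by induction for all $k>N$.

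\textbf{Constraint at $k=N$.} Since $x_N \in x^0_N + z_N + \mathcal{E}(V,\beta_N^2)$, for each row $i$ we have
\[
[H]_i x_N \leq \|[H]_i^\top\|_{V^{-1}} \bigl(\|x^0_N+z_N\|_V + \beta_N\bigr) \leq \|[H]_i^\top\|_{V^{-1}} \bigl(\|x^0_N\|_V + r + \beta_N\bigr).
\]
The first condition in (\ref{eq:term_constraint_set}) gives $\|x^0_N\|_V + r + \beta_N \leq \hat{\rho}$. The definition (\ref{eq:rho_def}) then yields $[H]_i x_N \leq [h]_i$, so $Hx_N\leq h$. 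In particular $x_N$ and its tube lie in $\bar{\mathcal{X}}$, which is what Lemma~\ref{lem:term_tube} requires at step $N$.

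\textbf{Shift step.} Assume $(r,\beta_N) := (\|z_N\|_V,\beta_N)\in\Omega(x^0_N)$, with witnesses $\beta'_{N+1},\ldots,\beta'_{N+\hat{N}}$. Set $\beta_{N+1}$ equal to the right-hand side of (\ref{eq:beta_term_bound}), which is exactly the bound defining $\beta'_{N+1}$ with $k=N+1$. Lemma~\ref{lem:term_tube} gives $r_+ := \|z_{N+1}\|_V \leq \hat{\lambda}^{1/2} r$ and $\|x^0_{N+1}\|_V \leq \hat{\lambda}^{1/2}\|x^0_N\|_V$. By monotonicity we may take $\beta'_{N+1}$ equal to this right-hand side, so $\beta_{N+1}=\beta'_{N+1}$.

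Now check $(r_+,\beta_{N+1})\in\Omega(x^0_{N+1})$:
\begin{itemize}
\item The first condition, $\beta_{N+1} \leq \hat{\rho} - (r_+ + \|x^0_{N+1}\|_V)$, follows from the witness bound $\beta'_{N+1} \leq \hat{\rho} - \hat{\lambda}^{1/2}(r+\|x^0_N\|_V)$ together with the contraction estimates above.
\item For the chain, use the shifted witnesses $\beta''_{k} := \beta'_{k+1}$ for $k=N+2,\ldots,N+\hat{N}$. Their recursion requires the term $\hat{\lambda}^{(k-N-2)/2}(r_+ d_{\hat{\Phi}} + d_\Theta L\|x^0_{N+1}\|_V)$. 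This is at most $\hat{\lambda}^{(k-N-1)/2}(r d_{\hat{\Phi}} + d_\Theta L \|x^0_N\|_V)$, which is the term in the original recursion at index $k$, so the shifted chain satisfies the required inequalities.
\item The upper bounds transfer in the same way: $\hat{\rho} - \hat{\lambda}^{(k-N-1)/2}(r_+ + \|x^0_{N+1}\|_V) \geq \hat{\rho}-\hat{\lambda}^{(k-N)/2}(r+\|x^0_N\|_V)$.
\item The new chain needs one more element, at index $N+1+\hat{N}$. Define it as $(\hat{\lambda}\beta'^2_{N+\hat{N}}+\sigma^2)^{1/2} + \hat{\lambda}^{\hat{N}/2}(r d_{\hat{\Phi}} + d_\Theta L\|x^0_N\|_V)$, which dominates the required lower bound. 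Its upper bound $\hat{\rho} - \hat{\lambda}^{(\hat{N}+1)/2}(r+\|x^0_N\|_V)$, which dominates the one needed in the shifted set, is exactly what (\ref{eq:Nc_def}) guarantees. This is the point where the choice of $\hat{N}$ is used.
\end{itemize}

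\textbf{Induction and main obstacle.} Repeating the two parts gives $(\|z_k\|_V,\beta_k)\in\Omega(x^0_k)$ for all $k>N$, and hence also $Hx_k\leq h$. The main obstacle is the bookkeeping in the shift step. The contracting quantities $r$ and $\|x^0\|_V$ must map the old chain's index-dependent terms onto the new chain's terms with the right exponents. The appended final element must also be certified by (\ref{eq:Nc_def}) uniformly over $\Omega$, which is why that condition is stated as a maximum over $\Omega(x^0_N)$.
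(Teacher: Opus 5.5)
Your proof is correct and is the paper's argument with the details filled in: the first condition in $\Omega(x^0_N)$ together with (\ref{eq:rho_def}) gives $Hx_N\le h$. Membership is then shifted from $N$ to $N+1$ using (\ref{eq:x0_term_bound}) and (\ref{eq:z_term_bound}), with (\ref{eq:Nc_def}) certifying the single appended chain element, and induction finishes. One indexing slip: the reused witnesses must be $\beta''_k := \beta'_k$ for $k=N+2,\ldots,N+\hat{N}$ (same absolute index, one position earlier relative to the new base $N+1$), not $\beta'_{k+1}$, which would call on a nonexistent $\beta'_{N+\hat{N}+1}$; your index-$k$ comparisons and your appended element built from $\beta'_{N+\hat{N}}$ already use the correct choice.
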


\begin{proof}
Suppose $(\|z_N\|_V,\beta_N)\in\Omega(x^0_N)$, then 
$Hx_N\leq h$ 
for all $x_N\in x^0_N+z_N+\mathcal{E}(V,\beta_N^2)$ 
since ${\|x^0_N + z_N\|_V + \beta_N \leq \hat{\rho}}$. If $\beta_{N+1}$ is equal to the rhs of (\ref{eq:beta_term_bound}), then from (\ref{eq:x0_term_bound}), (\ref{eq:z_term_bound}), (\ref{eq:Nc_def}) we have $(\|z_{N+1}\|_V,\beta_{N+1})\in\Omega(x^0_{N+1})$. 
By induction this argument implies $(\|z_{k}\|_V,\beta_{k})\in \Omega(x^0_k)$ for all $k>N$.
\end{proof}

\begin{remark}
If $\sigma^2/(1-\hat{\lambda}) < \hat{\rho}^2$, then
$\hat{N}$ in (\ref{eq:Nc_def}) is necessarily finite since
$\beta_{N+\hat{N}} = \sigma/(1-\hat{\lambda})^{\frac{1}{2}}$ is strictly feasible for~(\ref{eq:Nc_def}) in the limit as ${\hat{N}}\to\infty$.
The condition $(\|z_N\|_V,\beta_N)\in \Omega(x^0_N)$ introduces $2\hat{N}$ second order cone constraints and $\hat{N}$ scalar variables $\beta_{N+1},\ldots,\beta_{N+\hat{N}}$ into Problem~(\ref{opt:mpc}). 
The identity $(\hat{\lambda}\beta^2 + \sigma^2)^{\frac{1}{2}} \leq \hat{\lambda}^{\frac{1}{2}}\beta + \sigma$ implies a sufficient condition for~(\ref{eq:Nc_def}),
\begin{align}
\max_{(r,\beta_N)\in\Omega(x^0_N)} & \bigl\{
\hat{\lambda}^\frac{1}{2}\beta_{N+\hat{N}} + \sigma 
+ \hat{\lambda}^\frac{\hat{N}}{2}\! (r d_{\hat{\Phi}} +  d_\Theta L \|x^0_N\|_V) 
\nonumber \\
&\quad + \hat{\lambda}^\frac{\hat{N}+1}{2} (r +  \|x_k^0\|_V \bigr\} 
\leq \hat{\rho} ,
\label{eq:Nc_check}
\end{align}
which can be checked for given $\hat{N}$ by solving a SOCP. 
\end{remark}

The terminal cost $\hat{l}$ and parameter $\hat{\sigma}$ in Problem~(\ref{opt:mpc}) are constructed to ensure closed loop stability via the condition
\begin{multline}\label{eq:term_cost}
\hat{l}^2(z_{N+1},\beta_{N+1},x^0_{N+1}) - \hat{l}^2(z_N,\beta_N,x^0_N) 
\\
\leq -(\|x^0_N+z_N\|_{\hat{Q}} + \beta_N \|V^{-\frac{1}{2}}\|_{\hat{Q}})^2 + \hat{\sigma}^2 .
\end{multline}
Using variables $\beta_{N},\ldots,\beta_{N+\hat{N}}$ that appear in~(\ref{eq:term_constraint_set}), we define
\begin{align}
& 
\hat{l}^2(z_N,\beta_N,x^0_N) = \sum_{k=0}^{\hat{N}} 
l_{N+k}^2
\label{eq:hat_l_def}
\\
& l_{N+k} = \begin{cases}
\hat{\lambda}^{\frac{k}{2}}(\|x^0_N\|_V + \|z_N\|_V) + \beta_{N+k} & 0\leq k <\hat{N}
\\
\gamma \hat{\lambda}^{\frac{\hat{N}}{2}}(\|x^0_N\|_V + \|z_N\|_V) + \gamma\beta_{N+\hat{N}} &
k = \hat{N}
\end{cases}
\nonumber
\\
& \hat{\sigma} = 
\gamma \sigma + 
     \gamma  \hat{\lambda}^{\frac{\hat{N}}{2}} (d_{\hat{\Phi}}\hat{\rho} + d_\Theta L\|x^0_N\|_V)
\label{eq:hat_sigma_def}
\end{align}
where
$\gamma^2 = 1/(1-\hat{\lambda}^{\frac{1}{2}})$.

\begin{lemma}\label{lem:term_cost}
If $\hat{l}$ and $\hat{\sigma}$ are given by (\ref{eq:hat_l_def}) and (\ref{eq:hat_sigma_def}), then~(\ref{eq:term_cost}) holds for all $(z_N,\beta_N)$ satisfying $(\|z_N\|_V,\beta_N)\in\Omega(x^0_N)$.
\end{lemma}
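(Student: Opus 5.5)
The plan is a shift-and-append (telescoping) argument. The proof splits into four steps: build a feasible successor tube, compare the shifted middle terms of $\hat{l}^2$, bound the first term lost, and bound the new last term.

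\emph{Step 1: successor tube.} Write $a_N = \|x^0_N\|_V + \|z_N\|_V$. Let $\beta_{N+1},\ldots,\beta_{N+\hat{N}}$ be the variables certifying $(\|z_N\|_V,\beta_N)\in\Omega(x^0_N)$ in (\ref{eq:term_constraint_set}). By Lemma~\ref{lem:term_tube}, (\ref{eq:x0_term_bound}) and (\ref{eq:z_term_bound}) give $a_{N+1}\le\hat{\lambda}^{\frac12}a_N$. As certificate at time $N+1$ I would reuse the old variables, setting $\beta'_{N+1+k}=\beta_{N+1+k}$ for $0\le k<\hat{N}$. I would append $\beta'_{N+1+\hat{N}}$ equal to the right-hand side of the recursion in (\ref{eq:term_constraint_set}). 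Lemma~\ref{lem:termset} together with (\ref{eq:Nc_def}) shows this choice is admissible, since the forcing terms $\hat{\lambda}^{(k-N-1)/2}(r d_{\hat{\Phi}}+d_\Theta L\|x^0_N\|_V)$ only shrink when $r$ and $\|x^0\|_V$ are replaced by their contracted successors.

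\emph{Step 2: shifted middle terms.} Evaluate $\hat{l}^2(z_{N+1},\beta_{N+1},x^0_{N+1})$ with these variables. For $0\le k<\hat{N}-1$, the new term $l'_{N+1+k}=\hat{\lambda}^{k/2}a_{N+1}+\beta_{N+1+k}$ is at most the old $l_{N+1+k}$. So these terms cancel against the old terms $k=1,\ldots,\hat{N}-1$, except at the boundary. Put $m=\hat{\lambda}^{\hat{N}/2}a_N+\beta_{N+\hat{N}}$. Then the new term with $k=\hat{N}-1$ is at most $m$, whereas the old sum contained $\gamma m$ at index $\hat{N}$. After cancellation,
\[
\hat{l}'^2-\hat{l}^2 \le -l_N^2 + m^2 - \gamma^2 m^2 + \gamma^2 p^2 ,
\qquad p = \hat{\lambda}^{\frac{\hat{N}}{2}}a_{N+1}+\beta'_{N+1+\hat{N}} .
\]

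\emph{Step 3: the first term.} Because $V\succeq\hat{Q}$, we have $\|x\|_{\hat{Q}}\le\|x\|_V$ and $\|V^{-\frac12}\|_{\hat{Q}}\le 1$. Hence $l_N=a_N+\beta_N\ge\|x^0_N+z_N\|_{\hat{Q}}+\beta_N\|V^{-\frac12}\|_{\hat{Q}}$, so $-l_N^2$ supplies the required negative term in (\ref{eq:term_cost}).

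\emph{Step 4 (main obstacle): the appended term.} It remains to show $\gamma^2p^2-(\gamma^2-1)m^2\le\hat{\sigma}^2$. I would start from $(\hat{\lambda}\beta^2+\sigma^2)^{\frac12}\le\hat{\lambda}^{\frac12}\beta+\sigma$ and from $r=\|z_N\|_V\le\hat{\rho}$, which holds by the first inequality in $\Omega$. These give $p\le\hat{\lambda}^{\frac12}m+c$ with $c=\sigma+\hat{\lambda}^{\hat{N}/2}(d_{\hat{\Phi}}\hat{\rho}+d_\Theta L\|x^0_N\|_V)$, so that $\hat{\sigma}=\gamma c$. Note $\gamma^2-1=\hat{\lambda}^{\frac12}\gamma^2$. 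The first tool I would try is the convexity bound $(\hat{\lambda}^{\frac12}m+c)^2\le\hat{\lambda}^{\frac12}m^2+c^2/(1-\hat{\lambda}^{\frac12})$, i.e.\ Jensen with weights $\hat{\lambda}^{\frac12}$ and $1-\hat{\lambda}^{\frac12}$. This makes the $m^2$ terms cancel exactly, but it leaves a residual of order $\gamma^4c^2$ rather than $\gamma^2c^2$.

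Getting the constants to match the stated $\hat{\sigma}$ is therefore the step I expect to be delicate. I would try a sharper split that keeps the $\hat{\lambda}$ rather than $\hat{\lambda}^{\frac12}$ contraction on $\beta_{N+\hat{N}}$ inside the square root. Failing that, I would absorb the cross term $2\gamma^2\hat{\lambda}^{\frac12}mc$ using $m\le\hat{\rho}$, which follows from the last inequality in (\ref{eq:term_constraint_set}). I would check which weighting reproduces exactly (\ref{eq:hat_sigma_def}).
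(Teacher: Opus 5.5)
\textbf{There is a genuine gap: Step~4 is left open, and it cannot be closed with the stated $\hat\sigma$.} Your Steps 1--3 match the paper's argument. Its proof substitutes (\ref{eq:hat_l_def})--(\ref{eq:hat_sigma_def}) into (\ref{eq:term_cost}) and uses (\ref{eq:x0_term_bound})--(\ref{eq:beta_term_bound}) with $\|z_N\|_V\le\hat\rho$. It then applies the very bound you list as your first tool, $(\hat\lambda^{1/2}a+b)^2\le\hat\lambda^{1/2}a^2+b^2/(1-\hat\lambda^{1/2})$, with $a=m$, $b=c$. Your bookkeeping in Step~4 is correct: this bound yields $\gamma^2p^2-(\gamma^2-1)m^2\le\gamma^4c^2=\gamma^2\hat\sigma^2$, not $\hat\sigma^2$. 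So the paper's one-line proof does not reach the constant in (\ref{eq:hat_sigma_def}) either. Reweighting this route does not help. With weight $\mu\in(\hat\lambda,1)$ in the convexity split and weight $G$ on the last term, you need $G\ge\mu/(\mu-\hat\lambda)$. The residual $Gc^2/(1-\mu)$ is then smallest at $\mu=\hat\lambda^{1/2}$, $G=\gamma^2$, where it equals $\gamma^4c^2$.

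Your two fallbacks also cannot close it, because with $\hat N$ held fixed the target is false as stated. After Step~3 has used $-l_N^2$, obtaining $\hat\sigma=\gamma c$ requires $p^2\le\hat\lambda^{1/2}m^2+c^2$. With $p=\hat\lambda^{1/2}m+c$ this is $2c\le(1-\hat\lambda^{1/2})m$, which is false for moderate $m$. Invoking $m\le\hat\rho$ only adds a term of order $\gamma^2\hat\rho c$ to $\hat\sigma^2$.

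Keeping $\hat\lambda$ inside the square root does not rescue it either. Take the following instance.
\begin{itemize}
\item Set $V=I_2$, $\hat\Phi=\mathrm{diag}(0,0.99)$ and $\hat Q=\mathrm{diag}(1,0.0199)$ (e.g.\ $K=0$, $Q=\hat Q$).
\item Set $w^{(r)}=\pm(1,0)^\top$ and $\sigma=1$, with one LDI vertex and known $\theta$, so $d_{\hat\Phi}=d_\Theta=0$.
\item Then (\ref{eq:cost_bound}) holds with equality, $\hat\lambda=0.9801$ and $\gamma^2=\hat\sigma^2=100$.
\item With $\hat\rho=100$, the choice $\hat N=1$ satisfies (\ref{eq:Nc_def}).
\item Take $x^0_N=(0,15)^\top$, $z_N=0$, $\beta_N=1$, and the smallest admissible $\beta_{N+1}=1.9801^{1/2}$ and $\beta_{N+2}=(0.9801\beta_{N+1}^2+1)^{1/2}$.
\end{itemize}
Then the left side of (\ref{eq:term_cost}) is about $528$ and the right side is about $90$.

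What your argument, and the paper's, actually proves is (\ref{eq:term_cost}) with $\hat\sigma=\gamma^2\bigl(\sigma+\hat\lambda^{\hat N/2}(d_{\hat\Phi}\hat\rho+d_\Theta L\|x^0_N\|_V)\bigr)$. This is consistent with the $1/(1-\hat\lambda^{1/2})$ scaling given to $\gamma$ in Algorithm~\ref{alg:term}. With that constant your Step~4 closes immediately from the convexity bound. The bound $\bar\sigma$ in Theorem~\ref{thm:cost_bound} must then carry the same extra factor $\gamma$.
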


\begin{proof}
This can be shown by inserting (\ref{eq:hat_l_def})-(\ref{eq:hat_sigma_def}) into (\ref{eq:term_cost}) and using (\ref{eq:x0_term_bound})-(\ref{eq:beta_term_bound}) with $\|z_N\|_V\leq \hat{\rho}$, which holds if $(\|z_N\|_V,\beta)\in\Omega(x^0_N)$, and by using the following inequality, which holds for any scalars $a,b,\hat{\lambda}>0$:
$(\hat{\lambda}^{\frac{1}{2}}a + b)^2 \leq 
\hat{\lambda}^{\frac{1}{2}} a^2 + b^2/(1-\hat{\lambda}^{\frac{1}{2}})$.
\end{proof}%

The procedure for computing the parameters defining the terminal cost and constraint set is summarised in Algorithm~\ref{alg:term}.%

{\setlength{\algomargin}{1em}
\removelatexerror
\begin{algorithm2e}
\DontPrintSemicolon
\SetKwInOut{Input}{Input}
\SetKwInOut{Output}{Output}
\Input{Bounds $\hat{\mathcal{X}}, \hat{\mathcal{U}}, \Theta_0$, matrices $\hat{A}^{(j)},\hat{B}^{(j)}$ in~(\ref{eq:ldi_term}); 
disturbance, state and control sets $\mathcal{W}$, $\mathcal{X}$, $\mathcal{U}$; 
scalar $\hat{\rho}$ in~(\ref{eq:rho_def}); 
cost weights $Q$, $R$}
\Output{$V$, $\sigma$, $K$, $\hat{\lambda}$, $\gamma$, $\hat{N}$, $\hat{\sigma}$}
{
At time $t=0$: Solve $(S^\star,Y^\star, \tau^\star) = \arg\min_{S,Y,\tau} \tau$ s.t.~(\ref{eq:cost_lmi}) and set $V\gets (S^\star)^{-1}$, $K\gets Y^\star V$, $\sigma^2 \gets \tau^\star$, 
$\hat{\lambda} \gets 1 - \sigma_{\min} ( V^{-\frac{1}{2}} \hat{Q} V^{-\frac{1}{2}} )$,
$\gamma \gets 2\|V^{-\frac{1}{2}}\|_{\hat{Q}}^2/(1-\hat{\lambda}^{\frac{1}{2}\!})$\;
At times $t\geq 0$ in steps 4 and 14 of Alg.~\ref{alg:mpc}: 
Set $\hat{N}$ to a prior estimate (e.g.~$\hat{N}\gets 1$) and check (\ref{eq:Nc_check}); increase $\hat{N}$ until~(\ref{eq:Nc_check}) is satisfied; 
compute $\hat{\sigma}$ using (\ref{eq:hat_sigma_def})
}
\caption{Computation of terminal parameters}\label{alg:term}
\end{algorithm2e}}

\begin{remark}\label{rem:terminal_update} 
The parameters $\hat{N},\hat{\sigma}$
depend on $x^0_N$ and are computed online in Alg.~\ref{alg:term}.
Although $V,K,\sigma,d_\Theta$ are defined in terms of $\Theta_0$ in Alg.~\ref{alg:term}, these may also be computed online using $\Theta_t$ without compromising the feasibility of Problem~(\ref{opt:mpc}). 
Moreover, time-varying $K_k$, $V_k$, $k\in\mathbb{N}_{[0,N]}$ may be employed using a straightforward extension of Lemma~\ref{lem:beta_mode1_dynamics} (see e.g.~\cite{Can11}).
\end{remark}

\begin{remark}\label{rem_const_basis_function}
It is often beneficial to include fixed vectors \( f_i \), \( i \in \mathcal{C} \), in the expansion~(\ref{eq:theta_expansion}), representing a constant disturbance to be estimated online. Define \( w_c = \sum_{i \in \mathcal{C}} \theta_i f_i \), then \( w_c \) lies in the convex hull of points \( w_c^{(q)} \), \( q \in \mathbb{N}_{\nu_\theta} \) determined by the vertices of the parameter set \( \Theta \).
To accommodate this, we replace \( w^{(r)} \) with \( w^{(r)} + w_c^{(q)} \) in~(\ref{eq:cost_lmi}) so that \( w_c \) is incorporated in the value of $\sigma$ and $f_i$ for any $i\not\in\mathcal{C}$ satisfies $f_i(0,0)=0$.
\end{remark}

\section{Recursive feasibility and stability}\label{sec:stability}

This section discusses the stability properties of the control law of Algorithm~\ref{alg:mpc}. 
Assuming Algorithm~\ref{alg:term} is feasible at time $t=0$ and Problem (\ref{opt:mpc}) is feasible for the initial nominal trajectory $\mathbf{x}^0$ and $\mathbf{v}^0$ at $t=0$, we show that the system~($\ref{eq:system}$) with the control law of Algorithm~\ref{alg:mpc} robustly satisfies constraints $(x_t,u_t)\in\mathcal{X}\times\mathcal{U}$ at all times $t \geq 0$. In addition, we provide an asymptotic performance bound and we show that the closed loop system is input-to-state practically stable (ISpS).

We first demonstrate that Problem~(\ref{opt:mpc}) in Alg.~\ref{alg:mpc} is feasible at each time step using an inductive argument considering the feasibility of (\ref{opt:mpc}) at iteration $i+1$ at time $t$ assuming  feasibility at iteration $i$ at time $t$, and the feasibility of (\ref{opt:mpc}) at the first iteration at time $t+1$ assuming feasibility at the final iteration of time $t$. In the latter case we use the following observation. 

\begin{lemma}\label{lem:beta_dynamics_term_bound}
If 
$x^0_k + \mathcal{S} \subseteq \hat{\mathcal{X}}$, $v^0_k=0$ and $Kx^0_k + K \mathcal{S} \subseteq \hat{\mathcal{U}}$, then 
\begin{multline}\label{eq:beta_dynamics_term_bound}
(\lambda_k \beta_k^2 + \sigma^2)^{\frac{1}{2}} + \max_{j\in\mathbb{N}_{\nu_1}, q\in\mathbb{N}_{\nu_\theta}} \lVert C^{(j)}_k z_k + \delta^{0\,(q)}_k \rVert_V\\
\leq
(\hat{\lambda}\beta_k^2 + \sigma^2)^{\frac{1}{2}} + d_{\hat{\Phi}} \|z_k\|_V + d_\Theta \| x^0_k \|_V
\end{multline}
for all $z_k\in\mathbb{R}^{n_x}$\!, $\beta_k\in\mathbb{R}$,  where $\lambda_k$, $\hat{\lambda}$ are given by (\ref{eq:lambda_mode1_def}), (\ref{eq:lambda_term}).
\end{lemma}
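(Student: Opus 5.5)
The inequality splits into two independent comparisons, which I will prove separately and then add: first $\lambda_k\le\hat{\lambda}$ (the first terms then compare by monotonicity of $(\cdot\,\beta_k^2+\sigma^2)^{1/2}$), and second a bound on $\max_{j,q}\|C^{(j)}_k z_k+\delta^{0\,(q)}_k\|_V$, obtained from the triangle inequality by bounding the two pieces separately.

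\emph{Step 1: the LDI covers the linearisation data.} Under the hypotheses $x^0_k+\mathcal{S}\subseteq\hat{\mathcal{X}}$, $v^0_k=0$ and $Kx^0_k+K\mathcal{S}\subseteq\hat{\mathcal{U}}$, every point $(x^0_k+s,K(x^0_k+s)+v)$ at which the Jacobians in (\ref{eq:mvt_delta1}) are evaluated lies in $\hat{\mathcal{X}}\times\hat{\mathcal{U}}$. (I would also use $v\in\mathcal{V}$ here, or assume the $\mathcal{V}$-contribution is absorbed; this may need care.) Hence both $\Phi_k$ and every $\Phi_k+C^{(j)}_k$ are Jacobians of $f_K$ at points of the LDI region. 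Taking the vertices $C^{(j)}_k$ to be generated consistently with the LDI, they lie in $\mathrm{co}\{\hat{\Phi}^{(m)}\}$. It follows that $C^{(j)}_k\in\mathrm{co}\{\hat{\Phi}^{(m)}\}-\Phi_k$, so by convexity of the norm $\|C^{(j)}_k z_k\|_V\le d_{\hat{\Phi}}\|z_k\|_V$, exactly as in the proof of Lemma~\ref{lem:term_tube}.

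\emph{Step 2: $\lambda_k\le\hat{\lambda}$.} Take $M=\Phi_k+C^{(j)}_k=\sum_m\mu_m\hat{\Phi}^{(m)}$, a convex combination. The LMI (\ref{eq:cost_lmi}) gives, via Schur complement, $\|\hat{\Phi}^{(m)}x+w^{(r)}\|_V^2\le\hat{\lambda}\|x\|_V^2+\sigma^2$ for all $x$. Since the map $M\mapsto\|Mx+w\|_V^2$ is convex, the same inequality holds for $M$. This is equivalent (S-lemma / Schur complement, the same equivalence used in the proof of Lemma~\ref{lem:beta_mode1_dynamics}) to the LMI (\ref{eq:lambda_mode1_cond}) holding with $\lambda^{(j,r)}_k=\hat{\lambda}$, i.e.\ $\hat{\lambda}V\succeq M^\top\Psi^{(r)}M$. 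Therefore $\lambda_k\le\hat{\lambda}$ by (\ref{eq:lambda_mode1_def}). I expect this to be the main obstacle. The Schur-complement equivalence requires $\Psi^{(r)}$ to be well defined, i.e.\ $V^{-1}\succ w^{(r)}w^{(r)\top}\sigma^{-2}$, which follows from the $(2,3)$ block of (\ref{eq:cost_lmi}). Passing between the quadratic inequality in $(x,1)$ and the homogeneous condition also needs care, so I would verify the equivalence directly from the block matrix rather than through the S-lemma.

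\emph{Step 3: the $\delta^0$ term.} With $v^0_k=0$ we have $\delta^{0\,(q)}_k=f_K(x^0_k,0,\theta^{(q)}-\theta^0)=\sum_i(\theta^{(q)}_i-\theta^0_i)f_{K,i}(x^0_k,0)$. Each $f_{K,i}(\cdot,0)$ is Lipschitz on $\bar{\mathcal{X}}$ and vanishes at $0$, so $\|\delta^{0\,(q)}_k\|_V\le L\|\theta^{(q)}-\theta^0\|_1\|x^0_k\|_V\le d_\Theta L\|x^0_k\|_V$, as in Lemma~\ref{lem:term_tube}. The stated bound omits $L$, so I would read it with $L$ included, or with $L\le1$ absorbed by normalisation. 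Summing the three estimates completes the proof.
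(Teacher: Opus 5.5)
Your proposal is correct and follows the paper's proof. The paper also derives (\ref{eq:lambda_mode1_cond_term}) from (\ref{eq:cost_lmi}) at each LDI vertex $\hat{\Phi}^{(m)}$, uses the hypotheses to get $\mathrm{co}\{\Phi_k+C^{(j)}_k\}\subseteq\mathrm{co}\{\hat{\Phi}^{(m)}\}$, and concludes $\lambda_k\le\hat{\lambda}$; it relies on the LMI being affine in the matrix where you use convexity of $M\mapsto\|Mx+w\|_V^2$, which is equivalent. The paper merely asserts the bound on $\max_{j,q}\lVert C^{(j)}_k z_k+\delta^{0\,(q)}_k\rVert_V$ that your Steps 1 and 3 derive, and you are right that the factor $L$ is missing from the statement (compare Lemma~\ref{lem:term_tube} and the definition of $\Omega$); your one slip is that (\ref{eq:cost_lmi}) gives only $V^{-1}\succeq w^{(r)}w^{(r)\top}\sigma^{-2}$, so strict definiteness is an implicit assumption already needed for $\Psi^{(r)}$ and $\lambda_k$ to be defined.
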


\begin{proof}
From (\ref{eq:lambda_term}) we have
$V - \frac{1}{\gamma}(Q+K^\top R K) \preceq \hat{\lambda}V$,
so~(\ref{eq:cost_lmi}) implies, for all $j\in \mathbb{N}_{\hat{\nu}}$ and $r \in \mathbb{N}_{n_w}$%
\def\strut{\rule{0pt}{8pt}}%
\begin{equation}\label{eq:lambda_mode1_cond_term}
\biggl[\begin{smallmatrix}
\hat{\lambda} V & ~~~0~~~ & \hat{\Phi}{\mbox{}^{(j)}}^\top
\\
\star\strut & \sigma^2 & {w^{(r)}}^\top
\\
\star\strut & \star & V^{-1}
\end{smallmatrix}\biggr] \succeq 0 .
\end{equation}
Furthermore, if $x^0_k + \mathcal{S} \subseteq \hat{\mathcal{X}}$, $v^0_k=0$ and ${Kx^0_k + K\mathcal{S}\subseteq \hat{\mathcal{U}}}$, then
$\mathrm{co}\{ \hat{\Phi}^{(j)}, \, j \in \mathbb{N}_{\hat{\nu}} \} \supseteq \mathrm{co}\{ \Phi_k + C_k^{(j)}, \, j\in\mathbb{N}_{\nu_1} \}$. Comparing  
(\ref{eq:lambda_mode1_cond_term}) and (\ref{eq:lambda_mode1_cond}) yields
$\hat{\lambda} \geq \lambda^{(j,r)}$ and 
$\hat{\lambda} \geq \lambda_k$. Since $d_{\hat{\Phi}} \|z_k\|_V + d_\Theta \|x^0_k\|_V \geq \max_{l, q} \lVert C^{(l)}_k z_k + \delta^{0\,(q)}_k \rVert_V$ we obtain (\ref{eq:lambda_mode1_cond_term}).
\end{proof}

\begin{theorem}\label{thm:feasibility}
If at time $t=0$, $\mathbf{v}^0$ and $x^0_0=x^p_0$ generate a nominal trajectory such that Problem~(\ref{opt:mpc}) is feasible, then 
at any iteration $i>1$ of Algorithm~\ref{alg:mpc} at $t\geq 0$, Problem~(\ref{opt:mpc}) is feasible with
$\mathbf{v}^0 = \mathbf{v}^0_{\mathrm{old}}$,
and at iteration $i=1$ at any time $t>0$, Problem~(\ref{opt:mpc}) is feasible if  $\mathbf{v}^0 = \mathbf{v}^0_{\mathrm{old}}$ and $x^0_0 = x^0_{0,\mathrm{old}}$.
\end{theorem}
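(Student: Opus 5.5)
We argue by exhibiting an explicit feasible point in each of the two cases, using the structure of Algorithm~\ref{alg:mpc}: $\mathbf{v}^0_{\mathrm{old}}$ is always the linearization sequence at which the most recent successful solve of Problem~(\ref{opt:mpc}) took place (or, after the shift at line 23, its shifted version). Induction runs over iterations and time steps, with the hypothesis at $t=0$ as base case.

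\emph{Case $i>1$.} Let $(\mathbf{v}^\star,\boldsymbol{\beta}^\star,\mathbf{z}^\star,\mathbf{l}^\star,\bar J^{(i-1)}_t)$ be the solution at iteration $i-1$ computed about the nominal trajectory $\mathbf{x}^0_{\mathrm{old}}$ generated by $\mathbf{v}^0_{\mathrm{old}}$. Setting $\mathbf{v}^0=\mathbf{v}^0_{\mathrm{old}}$ regenerates the identical nominal trajectory, Jacobians $\Phi_k,B_k$, bounds $\mathcal{W}^0_k,\mathcal{W}^1_k$ and $\lambda_k$, and the terminal quantities of Algorithm~\ref{alg:term}. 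Hence the constraint set of Problem~(\ref{opt:mpc}) coincides with that of iteration $i-1$ except for the cost-decrease constraint $\bar J^{(i)}_t\le \bar J^{(i-1)}_t$. That constraint is met by the previous optimizer itself, so the old solution is feasible. This is the easy step. The key observation is that the line search in lines 8--21 therefore always has a fallback: in the limit $\alpha\to 0$ it returns to $\mathbf{v}^0_{\mathrm{old}}$, and line 18 resets to it explicitly.

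\emph{Case $i=1$, $t>0$.} Here $\mathbf{v}^0_{\mathrm{old}}$ and $x^0_{0,\mathrm{old}}$ are the shifted versions from line 23. Take the final feasible solution at time $t-1$, with nominal trajectory $\mathbf{x}^0$ and variables $(\mathbf{v},\boldsymbol\beta,\mathbf{z},\mathbf{l})$. Build the candidate by shifting: $\tilde z_k=z_{k+1}$, $\tilde v_k=v_{k+1}$ and $\tilde\beta_k=\beta_{k+1}$ for $k\le N-1$. Set $\tilde v_{N-1}=0$, $\tilde z_N=\Phi_N z_N$ (the linearization about $x^0_N$ with $v^0_N=0$), and let $\tilde\beta_N$ equal the right-hand side of (\ref{eq:beta_term_bound}). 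The new nominal trajectory $\{x^0_1,\dots,x^0_N,f_K(x^0_N,0,\theta^0)\}$ coincides with $\mathbf{x}^0_{\mathrm{old}}$ of line 23. The linearization data for $k\le N-2$ are unchanged, although $\mathcal{W}^0_k,\mathcal{W}^1_k,\lambda_k$ can only shrink because $\Theta_t\subseteq\Theta_{t-1}$; the tube constraints (\ref{eq:beta_mode1_dynamics}) therefore persist.

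The initial condition $\tilde\beta_0\ge\|x^0_1+z_1-x^p_t\|_V$ follows from $x^p_t=x^p_{t-1}$ propagated through the true dynamics. The true successor lies in $x^0_1+z_1+\mathcal{E}(V,\beta_1^2)$ by Lemma~\ref{lem:beta_mode1_dynamics} and the membership argument of Section~\ref{sec:tube_conditions}, since $\theta\in\Theta_{t-1}$ and $w\in\mathcal{W}$. State, input, $\mathcal{V}$ and $\mathcal{S}$ constraints for $k\le N-2$ are inherited. For the step $k=N-1\to N$ and the terminal condition, I would use Lemma~\ref{lem:termset}: $(\|z_N\|_V,\beta_N)\in\Omega(x^0_N)$ gives $Hx\le h$ on the tube at stage $N$, so constraints hold at $k=N-1$ of the new problem. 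Lemma~\ref{lem:beta_dynamics_term_bound} shows that the tube inequality with the new $\lambda_{N-1}$, $C^{(j)}$ and $\delta^{0(q)}$ is implied by the terminal recursion (\ref{eq:beta_term_bound}). Lemma~\ref{lem:term_tube} and Lemma~\ref{lem:termset} then give $(\|\tilde z_N\|_V,\tilde\beta_N)\in\Omega(x^0_{N+1})$, using the shifted $\beta_{N+1},\dots$ extended by one step satisfying (\ref{eq:Nc_def}). Membership in $\mathcal{S}$ of the last tube must also be checked. I expect this to require $\mathcal{S}$ to contain the terminal region, using $\hat\rho$ from (\ref{eq:rho_def}); I would argue it via $\|z\|_V+\beta\le\hat\rho$.

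Finally comes the cost constraint: set $\tilde l_k=l_{k+1}$ and let $\tilde l_N$ be defined through $\hat l$ at the shifted terminal state. Summing gives $\sum\tilde l_k^2=\sum_{k\ge1}l_k^2+\hat l^2(\tilde z_N,\tilde\beta_N,x^0_{N+1})-\hat l^2(z_N,\beta_N,x^0_N)+(\text{stage-}N\text{ term})$. Lemma~\ref{lem:term_cost}, through (\ref{eq:term_cost}), bounds the terminal change by minus the stage-$N$ term plus $\hat\sigma^2$. Since $l_0^2$ dominates $\|x_{t-1}\|_Q^2+\|u_{t-1}\|_R^2$ (the applied state lies in the stage-$0$ tube), this yields $\bar J^{(1)}_t\le\bar J^{(\mathit{final})}_{t-1}-(\|x_{t-1}\|^2_Q+\|u_{t-1}\|^2_R)+\hat\sigma^2$.

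The main obstacle is the handover at stage $N-1\to N$. There the linearization bounds $C^{(j)}_{N-1}$, $\delta^{0(q)}$ must be dominated by the LDI terminal quantities, which requires the hypotheses of Lemma~\ref{lem:beta_dynamics_term_bound}, namely $x^0_N+\mathcal{S}\subseteq\hat{\mathcal X}$ and $Kx^0_N+K\mathcal S\subseteq\hat{\mathcal U}$. I would first try to derive these from the terminal constraint and the definition of $\hat\rho$; failing that, I would state them as standing design assumptions on $\mathcal{S}$.
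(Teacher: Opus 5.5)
Your proposal is correct and follows essentially the same route as the paper's proof. The paper also argues by induction over three situations: reusing the previous solution at $\mathbf{v}^0_{\mathrm{old}}$ for $i>1$; the shifted candidate at $i=1$, justified by the definition of $\Omega$ and Lemmas~\ref{lem:term_tube}, \ref{lem:termset}, \ref{lem:term_cost} and \ref{lem:beta_dynamics_term_bound}; and the fallback of the line search. Your version spells out the shifted candidate and the cost-decrease computation that the paper leaves implicit.

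The conditions you flag at the stage $N-1\to N$ handover are $x^0_N+\mathcal{S}\subseteq\hat{\mathcal{X}}$, $Kx^0_N+K\mathcal{S}\subseteq\hat{\mathcal{U}}$, and $z_N+\mathcal{E}(V,\beta_N^2)\subseteq\mathcal{S}$ (for example via $\mathcal{E}(V,\hat\rho^2)\subseteq\mathcal{S}$). These do not follow from $\Omega(x^0_N)$ alone, and the paper's proof also assumes them tacitly, so stating them as standing design assumptions on $\mathcal{S}$ is the right resolution.
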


\begin{proof}
This follows by induction from the following three cases. 
(i) If, at  iteration $\iter$ at time $t$, $\mathbf{v}^0$ and $x^0_0$ generate a nominal trajectory such that Problem~(\ref{opt:mpc}) in line~5 of Algorithm~\ref{alg:mpc} is feasible, then line~22 trivially ensures that~(\ref{opt:mpc}) is feasible at iteration $i+1$ time $t$ with $\mathbf{v}^0 = \mathbf{v}^0_{\mathrm{old}}$.
(ii) If, at the final iteration of time $t$, $\mathbf{v}^0$ and $x^0_0$ generate a nominal trajectory such that Problem~(\ref{opt:mpc}) in line~5 is feasible, then line~24 ensures 
(due to the definition of $\Omega$ and 
Lemmas~\ref{lem:term_tube}, \ref{lem:termset}, \ref{lem:term_cost}, and \ref{lem:beta_dynamics_term_bound})
that $\mathbf{v}^0=\mathbf{v}^0_{\mathrm{old}}$ and $x^0_0 = x^0_{0,\mathrm{old}}$ 
generate a nominal trajectory at iteration $i=1$ time $t+1$ such that Problem~(\ref{opt:mpc}) is feasible.
(iii) If, at any iteration $\iter$ and time $t$, Problem~(\ref{opt:mpc}) in line~5 is infeasible, then the feasibility of $\mathbf{v}^0 =\mathbf{v}^0_{\mathrm{old}}$ (or $\mathbf{v}^0 =\mathbf{v}^0_{\mathrm{old}}$ and $x^0_0=x^0_{0,\mathrm{old}}$ if $i=1$) implies that the line search in lines 8-21 necessarily terminates with $\mathbf{v}^0$ and $x^0_0$ that generate a nominal trajectory for which Problem~(\ref{opt:mpc}) is feasible, and hence feasibility of $\mathbf{v}^0 = \mathbf{v}^0_{\mathrm{old}}$ (or $\mathbf{v}^0 =\mathbf{v}^0_{\mathrm{old}}$ and $x^0_0=x^0_{0,\mathrm{old}}$) is ensured at iteration $i+1$ at time $t$ (or iteration $i=1$ at time $t+1$) by line~22 (or line~24, respectively).
\end{proof}

\begin{theorem}\label{thm:cost_bound}
If~(\ref{opt:mpc}) is feasible at $t=0$, then the control input and state of~(\ref{eq:system}) under Alg.~\ref{alg:mpc} satisfy $x_t\in \mathcal{X}$, $u_t\in\mathcal{U}$ and
\begin{equation}\label{eq:closedloop_cost}
\limsup_{T\rightarrow \infty} \frac{1}{T}\sum_{t=0}^{T-1} (\left\lVert x_t \right\rVert^2_Q +  \left\lVert u_t \right\rVert^2_R)\leq \bar{\sigma}^2 
\end{equation}
where $\bar{\sigma} = \gamma \sigma + 
\gamma \hat{\rho} (d_{\hat{\Phi}} + d_\Theta L)$.
\end{theorem}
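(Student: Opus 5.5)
Constraint satisfaction follows from Theorem~\ref{thm:feasibility}: Problem~(\ref{opt:mpc}) is feasible at every iteration of every time step. The applied input is $u_t = Kx^p_t + v^0_0$, where $v^0_0$ is the final nominal perturbation obtained from a feasible solution, and $x^0_0$ is either $x^p_t$ or a line-search point. In both cases the constraint $\beta_0 \geq \|x^0_0+z_0-x^p_t\|_V$ places $x^p_t$ in $x^0_0+z_0+\mathcal{E}(V,\beta_0^2)$. The state, input and perturbation constraints of (\ref{opt:mpc}) with $k=0$ then give $x_t\in\mathcal{X}$ and $u_t\in\mathcal{U}$. One point needs checking: after the final update $\mathbf{v}^0\gets\mathbf{v}^0+\mathbf{v}^\star$, the applied input corresponds to the optimized $v^0_0+v_0$. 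The tube constraint $\mathcal{U}\supset K(x^0_0+z_0+\mathcal{E})+v^0_0+v_0$ is written exactly in these updated terms, so it applies directly. In the infeasible fallback, $\mathbf{v}^\star=0$ and the old feasible solution is retained.

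For the cost bound I will use $\bar{J}$ as a Lyapunov-like function across time steps. The iteration constraints give $\bar{J}^{(\mathit{final})}_t \leq \bar{J}^{(1)}_t$, since each later iteration is non-increasing. The first-iteration constraint gives
\[
\bar{J}^{(1)}_{t} \leq \bar{J}^{(\mathit{final})}_{t-1} - (\|x_{t-1}\|_Q^2+\|u_{t-1}\|_R^2) + \hat{\sigma}_{t}^2 .
\]
This constraint must be admissible, i.e.\ feasible for the shifted candidate. That follows from Theorem~\ref{thm:feasibility}(ii) together with Lemma~\ref{lem:term_cost}. The shifted tail reuses the stages $l_1,\dots,l_N$ and the terminal sequence. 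Its cost is the previous cost minus $l_0^2$ plus the terminal increment, and (\ref{eq:term_cost}) bounds that increment by $\hat{\sigma}^2$ minus the new stage term. Since $l_0^2$ bounds the realized stage cost, because $x_{t-1}$ lies in the initial tube, the candidate satisfies the decrease inequality. Summing from $t=1$ to $T$ and using $\bar{J}\ge 0$ gives
\[
\sum_{t=0}^{T-1}(\|x_t\|_Q^2+\|u_t\|_R^2) \leq \bar{J}^{(\mathit{final})}_0 + \sum_{t=1}^{T}\hat{\sigma}_t^2 .
\]
Dividing by $T$ and letting $T\to\infty$ then yields the result, provided $\hat{\sigma}_t\le\bar{\sigma}$ uniformly.

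The remaining step is this uniform bound on $\hat{\sigma}$. From (\ref{eq:hat_sigma_def}), $\hat{\sigma}=\gamma\sigma+\gamma\hat{\lambda}^{\hat{N}/2}(d_{\hat{\Phi}}\hat{\rho}+d_\Theta L\|x^0_N\|_V)$. Since $\hat{\lambda}<1$, I drop the factor $\hat{\lambda}^{\hat{N}/2}\le 1$. The terminal constraint $\Omega(x^0_N)$ requires $\beta_N\le\hat{\rho}-(\|z_N\|_V+\|x^0_N\|_V)$, and $\beta_N\ge 0$, so $\|x^0_N\|_V\le\hat{\rho}$. Together these give $\hat{\sigma}\le\gamma\sigma+\gamma\hat{\rho}(d_{\hat{\Phi}}+d_\Theta L)=\bar{\sigma}$.

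I expect the main obstacle to be the middle step: checking carefully that the shifted candidate satisfies the first-iteration decrease constraint. This needs the terminal stage sequence $l_{N+k}$ in (\ref{eq:hat_l_def}) to align with the shifted tube stages, where Lemma~\ref{lem:beta_dynamics_term_bound} ensures the $\beta$ recursion is consistent. It also needs the nominal state $x^0_N$ appearing in $\hat{\sigma}$ to be the one from the previous time step. I would handle this by taking $\hat{\sigma}$ computed at $t-1$ in the bound and applying the uniform estimate above.
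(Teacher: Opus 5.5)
Your proposal is correct and follows the same route as the paper. You bound $\hat\sigma\le\bar\sigma$ using $\|x^0_N\|_V\le\hat\rho$ (from $\Omega(x^0_N)$) and $\hat\lambda^{\hat N/2}\le 1$, chain the first-iteration and non-increasing-iteration constraints into the per-step decrease (\ref{eq:cost_decrease}), and telescope. Compared with the paper, you are more explicit about $x_t\in\mathcal X$, $u_t\in\mathcal U$ via the $k=0$ tube constraints and about using $\bar J^{(\mathit{final})}_t\ge 0$ in the telescoping. Your shifted-candidate check only re-derives part of Theorem~\ref{thm:feasibility} and is not needed here, since any $\hat\sigma$ arising in a feasible instance of (\ref{opt:mpc}) is already bounded by $\bar\sigma$.
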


\begin{proof}
From $(\|z_N\|_V,\beta_N)\in\Omega(x_N^0)$ we have $\|x_N^0\|_V \leq \hat{\rho}$ and hence   
$\hat{\sigma} = \gamma \sigma + \gamma\hat{\lambda}^{\frac{\hat{N}}{2}} (d_{\hat{\Phi}} \hat{\rho} + d_{\Theta}L\|x_N^0\|_V ) \leq \bar{\sigma}$.
The constraints of Problem~(\ref{opt:mpc}) therefore ensure, for all $t\geq 0$,
\begin{equation}\label{eq:cost_decrease}
\bar{J}^{(\mathit{final})}_{t+1} - \bar{J}^{(\mathit{final})}_{t} \leq -\bigl(\lVert x_{t} \rVert^2_Q +  \lVert u_{t} \rVert^2_R\bigr) + \bar{\sigma}^2 ,
\end{equation}
which implies~(\ref{eq:closedloop_cost}) because $\bar{J}^{(\mathit{final})}_{t}$ must be finite for all $t$.
\end{proof}

To complement the robust constraint satisfaction and performance bound of Theorem~\ref{thm:cost_bound}, closed loop stability can be characterized in terms of ISpS~\cite[Def.~6]{limon09} as follows.

\begin{theorem}\label{thm:ISpS_bound}
Let $d_{\mathcal{W}} = \max_{w\in\mathcal{W}} \|w\|$. If~(\ref{opt:mpc}) is feasible at $t=0$, then the state of~(\ref{eq:system}) under Alg.~1 satisfies, for all $t\geq 0$,
\begin{equation}\label{eq:ISpS_bound}
  \|x_t\| \leq \alpha_1(\|x_0\|, t) + \alpha_2(d_{\mathcal{W}}) + \alpha_3(d_\Theta) + \alpha_4(d_{\hat{\Phi}}),
\end{equation}
where $\alpha_1$ is a $\mathcal{K}\mathcal{L}$-function, $\alpha_2,\alpha_3,\alpha_4$ are $\mathcal{K}$-functions.
\end{theorem}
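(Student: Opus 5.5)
We use the final optimal cost $\bar{J}_t := \bar{J}^{(\mathit{final})}_t$ as an ISpS Lyapunov function, in the sense of \cite[Def.~6]{limon09}.
To do so we need three ingredients:
\begin{itemize}
\item a lower bound $\bar{J}_t \geq \alpha_l(\|x_t\|)$;
\item a bound $\bar{J}_t \leq \alpha_u(\|x_t\|) + c$ on the feasible set, where $c$ collects the uncertainty-dependent terms;
\item the decrease condition~(\ref{eq:cost_decrease}) from the proof of Theorem~\ref{thm:cost_bound}, namely $\bar{J}_{t+1} - \bar{J}_t \leq -\|x_t\|_Q^2 + \bar{\sigma}^2$.
\end{itemize}
The offset $\bar{\sigma} = \gamma\sigma + \gamma\hat{\rho}(d_{\hat{\Phi}} + d_\Theta L)$ is what produces the $\mathcal{K}$-functions of $d_{\mathcal{W}}$, $d_\Theta$ and $d_{\hat{\Phi}}$.

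The lower bound is immediate. The constraints on $l_0$ and $\beta_0$ bound the worst case over $x^0_0 + z_0 + \mathcal{E}(V,\beta_0^2)$, and this set contains $x^p_t$. Hence $\bar{J}_t \geq l_0^2 \geq \|x_t\|_Q^2 \geq \lambda_{\min}(Q)\|x_t\|^2$.

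For the upper bound we use compactness: the feasible set lies in $\mathcal{X}$, so all $l_k$ and $\beta_k$ are bounded by a constant $M$. This gives the crude bound $\bar{J}_t \leq M$, which combined with a local quadratic bound yields $\bar{J}_t \leq \alpha_u(\|x_t\|) + c_0$ with $\alpha_u \in \mathcal{K}_\infty$.

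For the local quadratic bound, take $x_t$ in a neighbourhood where the terminal region is reachable with $\mathbf{v}=0$. There Lemmas~\ref{lem:term_tube} and~\ref{lem:term_cost} bound $\hat{l}^2$ by a quadratic in $\|x_t\|_V$ plus terms in $\beta$. The recursion~(\ref{eq:beta_term_bound}) makes $\beta_k$ bounded by $\sigma/(1-\hat{\lambda})^{1/2}$ plus terms linear in $d_{\hat{\Phi}}$ and $d_\Theta$. The additive constant $c_0$ should therefore be a $\mathcal{K}$-function of $\sigma$, $d_\Theta$ and $d_{\hat{\Phi}}$.

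We relate $\sigma$ to $d_{\mathcal{W}}$ by observing that $\tau$ in~(\ref{eq:cost_lmi}) can be chosen as a $\mathcal{K}$-function of $\max_r\|w^{(r)}\|_V$. One way to see this is to scale: for fixed $(S,Y)$ with strict margin, the Schur complement requires $\tau \geq w^{\top} M w$ for some $M \succeq 0$. Hence $\sigma \leq \kappa\, d_{\mathcal{W}}$.

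With these bounds in place, the rest is standard. Iterating the decrease, via the usual comparison-lemma argument for $\bar{J}_{t+1} \leq \bar{J}_t - \alpha_3'(\bar{J}_t) + \bar{\sigma}^2$, gives
\[
\bar{J}_t \leq \beta'(\bar{J}_0, t) + \alpha''(\bar{\sigma}^2)
\]
for some $\mathcal{KL}$-function $\beta'$ and $\mathcal{K}$-function $\alpha''$. Taking square roots, using the lower bound, and splitting $\sqrt{a+b+c}\leq\sqrt{a}+\sqrt{b}+\sqrt{c}$ separates the terms in $\bar{\sigma}$. Since $\bar{\sigma}$ is linear in $\sigma$, $d_\Theta$ and $d_{\hat{\Phi}}$, and $\sigma \leq \kappa d_{\mathcal{W}}$, we obtain $\alpha_2$, $\alpha_3$ and $\alpha_4$. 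The term $\beta'(\alpha_u(\|x_0\|)+c_0,t)$ is dominated by a $\mathcal{KL}$-function of $\|x_0\|$ plus a $\mathcal{K}$-function of $c_0$, using $\beta'(a+b,t)\leq\beta'(2a,t)+\beta'(2b,0)$.

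The main obstacle is the upper bound: we must show $\bar{J}_t$ is bounded by a $\mathcal{K}$-function of $\|x_t\|$ plus uncertainty-dependent constants, uniformly over the iterations and line search of Alg.~\ref{alg:mpc}. Here $\bar{J}_t$ is the cost of a feasible, not necessarily optimal, solution. It is constrained only by the decrease inequality from the previous step, so it does not automatically inherit an upper bound from $x_t$. We will handle this in two steps. First we use the decrease chain itself: $\bar{J}_t \leq \bar{J}_0 + t\bar{\sigma}^2$ is useless, so instead we note that $\bar{J}_t \leq \bar{J}_{t-1} - \|x_{t-1}\|_Q^2 + \bar{\sigma}^2$. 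Second, by the boundedness of the feasible set, $\bar{J}_0$ is bounded by a $\mathcal{K}_\infty$ function of $\|x_0\|$ plus a constant. We then propagate via the decrease inequality directly, rather than through a state-dependent upper bound at every $t$. This yields a $\mathcal{KL}$ bound in $\|x_0\|$ (possibly only on a compact feasible region, which suffices here) together with a residual proportional to $\bar{\sigma}^2$.
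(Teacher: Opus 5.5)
You have the right three ingredients, and two of them are fine. The lower bound $\bar J_t\ge l_0^2\ge\|x_t\|_Q^2$ follows from the $l_0$ and $\beta_0$ constraints. Your Schur-complement bound $\sigma\le\kappa d_{\mathcal{W}}$ is an acceptable substitute for the paper's homogeneity argument, in which $w^{(r)}\gets\kappa w^{(r)}$ gives $\tau\gets\kappa^2\tau$ by a congruence of~(\ref{eq:cost_lmi}).

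The gap is in how you finally handle the upper bound. Suppose you propagate~(\ref{eq:cost_decrease}) from $t=0$ without a bound $\bar J_t\le\alpha_u(\|x_t\|)+c$ at every $t$. Then you only get $\sum_{s<t}\|x_s\|_Q^2\le\bar J_0+t\bar\sigma^2$. That is the averaged estimate of Theorem~\ref{thm:cost_bound}, not a pointwise $\mathcal{KL}$ bound. The comparison inequality $\bar J_{t+1}\le\bar J_t-\alpha_3'(\bar J_t)+\bar\sigma^2$ that you rely on needs $\|x_t\|_Q^2\ge\alpha(\bar J_t-c)$ at each step. That is exactly the per-step upper bound you propose to avoid. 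Without it, your inequalities do not rule out the following trajectory: it stays near the origin while $\bar J_t$ grows by up to $\bar\sigma^2$ per step to the compactness bound $M$. After that, $\|x_t\|$ may be of order $(M/\lambda_{\min}(Q))^{1/2}$, and this offset is not a $\mathcal{K}$-function of $d_{\mathcal{W}}$, $d_\Theta$, $d_{\hat\Phi}$.

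So the upper-bound condition of an ISpS-Lyapunov function \cite[Def.~7]{limon09} must hold at every $t$; a bound at $t=0$ cannot replace it. The paper supplies it by treating $\bar J^{(\mathit{final})}_t$ as the optimal value of the SOCP~(\ref{opt:mpc}) as a function of $x^p_t$, and asserting that this value is continuous in $x^p_t$. It then invokes \cite[Thm.~3]{limon09} rather than redoing the comparison argument.

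You are right that this is the delicate step, because~(\ref{opt:mpc}) also depends on the inherited linearization trajectory $\mathbf{x}^0,\mathbf{v}^0$ and on $\Theta_t$. But to finish your proof you have to establish this bound rather than abandon it. One way is to show that your local candidate is feasible uniformly over the linearization points Algorithm~\ref{alg:mpc} can generate. Here the candidate has zero total perturbation and $\beta_k$ equal to the right-hand side of~(\ref{eq:beta_term_bound}). The additive constant would then be a $\mathcal{K}$-function of $\sigma$, $d_\Theta$, $d_{\hat\Phi}$.
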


\begin{proof}
We first show that $\sigma$ is a $\mathcal{K}$-function of $d_{\mathcal{W}}$.
This follows from the definition of $\sigma$ as the minimal value of $\sqrt{\tau}$ satisfying (\ref{eq:cost_lmi}) and the observation that scaling the disturbance set $\mathcal{W}$ scales $\sigma$ by the same factor (namely, if $w^{(r)}\gets \kappa w^{(r)}$ for all $r$ and some $\kappa\in (0,1)$, then $\tau\gets \kappa^2 \tau$).
Moreover, $L$, $\gamma$ and $\hat{\rho}$ are bounded positive scalars ($L$ by assumption, $\gamma$ because $\hat{\lambda}\in[0,1)$, and $\hat{\rho}$ because $\hat{Q} \preceq V \preceq (1-\hat{\lambda})^{-1} \hat{Q}$).
%
Therefore, from $\bar{\sigma} = \gamma \sigma + \gamma \hat{\rho} (d_{\hat{\Phi}} + d_\Theta L)$, we have $\bar{\sigma} = \alpha_5(d_{\mathcal{W}}) + \alpha_6(d_\Theta) + \alpha_6(d_{\hat{\Phi}})$ where $\alpha_5,\alpha_6,\alpha_7$ are $\mathcal{K}$-functions.
Finally, the optimal objective of the SOCP~(\ref{opt:mpc}) is a continuous function of $x_t^p$, 
and (\ref{eq:cost_decrease}) with $x_t = x_t^p$ therefore implies $\bar{J}^{(\mathit{final})\!}_{t}$ is an ISpS-Lyapunov function~\cite[Def.~7]{limon09},
so~(\ref{eq:ISpS_bound}) follows from~\cite[Thm.~3]{limon09}.
\end{proof}

\section{Online learning of system parameters}\label{sec:param_est}

To use set membership estimation (SME)~\cite{Lor19,Lu21} to estimate $\theta$, we define  $D_t = D(x_t, u_t) = \bigl[f_1(x_t,u_t)\ \cdots \ f_{n_\theta}(x_t, u_t)\bigr]$, $d_t=d_t(x_t, u_t) = f_0(x_t,u_t)$, and 
rewrite (\ref{eq:system}) as:
\[
x_{t+1} = D_t \theta + d_t + w_t .
\]
Let $\Theta_t = \{\theta\in\mathbb{R}^{n_{\theta}} : H_\Theta \theta \leq h_t\}$ 
$\forall t\geq 0$, where $H_\Theta$ is fixed and $h_t$ is updated online using measured (or estimated) system states. This fixes the complexity (number of vertices) of the polytope $\Theta_t$. For example, if $H_\Theta = [-I_{n_\theta} \ \underline{1}]^\top$
then $\Theta_t$ is a simplex with $n_\theta+1$ vertices for all $t$.
Using an estimation horizon of length $N_{\Theta}\geq 1$, we determine $\Theta_{t}$ at time $t$ from the intersection of $\Theta_{t-1}$ with unfalsified parameter sets corresponding to $N_{\Theta}$ state observations. This requires the solution of a Linear Program (LP) for each row $i$ of $H_{\Theta}$:
\[
  [h_{t}]_i = \max_{\theta\in\Theta_{t-1}} [H_{\Theta}]_i \theta \ \text{subject to} \
\begin{aligned}[t]
x_{t+1-l} - D_{t-l} \theta - d_{t-l} \in \mathcal{W} &
  \\
\forall l \in \{ 1,\ldots, N_{\Theta}\} &
  \end{aligned}
\]
In \cite{Lu21} it is shown that the estimated parameter sets  satisfy $\Theta_{t} \subseteq \Theta_{t-1} \subseteq \cdots \subseteq \Theta_0$, and $\Theta_t$ converges to the true parameter vector $\theta^\ast$ under conditions ensuring persistency of excitation. 

\section{Numerical results} \label{sec:simex}

\def\strut{\rule{0pt}{8pt}}
\begin{table*}[ht]
\caption{Scaling of computation per iteration with problem dimensions}
\label{table:computation}
\centerline{\begin{tabular}{@{}l | r r r r r r r r r r @{}}
\hline
$(n_x,n_u,n_{\theta})$\strut & $(2,1,2)$ & $(4,2,2)$ & $(4,2,4)$ & $(6,2,4)$ & $(5,2,5)$ & $(6,2,6) $ & $(8,2,8) $ & $(8,4,8) $ & $(10,4,10) $ & $(12,4,12)$ 
\\ \hline
Variables\strut & 48 & 60 & 60 & 62 & 61 & 62 & 64 & 84 & 86 & 88
\\
Linear inequalities\strut & 57 & 97 & 97 & 117 & 107 & 117 & 137 & 177 & 197 & 217
\\
SOC constraints\strut & 294 & 474 & 1274 & 1774 & 2184 & 3454 & 7314 & 7314 & 13334 & 21994
\\
Execution time (\SI{}{\second})\strut & 0.051 & 0.143 & 0.434 & 0.562 & 0.683 & 1.09 & 2.53 & 4.23 & 5.69 & 11.08
\\
\hline
\end{tabular}}
\end{table*}

The proposed MPC law was tested using random examples with models~(\ref{eq:system})-(\ref{eq:theta_expansion}) containing quadratic nonlinearities:
\[
  f_0(x,u) = Ax + Bu, \ \ f_i(x,u) = \hat{e}_i [x]_{j_i}^2, \ i\in \mathbb{N}_{n_\theta}
\]
where $A,B$ are randomly chosen matrices,
$\hat{e}_i$ is the $i$th column of the identity matrix $I_{n_x}$,
$j_i$ is a randomly chosen index from $\mathbb{N}_{n_x}$,
and $A$, $B$ and $j_1,\ldots,j_{n_\theta}$ are known to the controller.
The disturbance set $\mathcal{W}$ belongs to a subspace of dimension $n_w$ and has vertices $w^{(r)} = B_w \hat{w}^{(r)}$, $r\in\mathbb{N}_{2^{n_w}}$, where $B_w\in\mathbb{R}^{n_x\times n_w}$ is a randomly generated full column-rank matrix and $B_w$, $\{\hat{w}^{(r)},\, r\in\mathbb{N}_{\nu_w}\}$ are known.
The true parameter $\theta^\ast$ and initial parameter set $\Theta_0$ are randomly chosen so that $\theta^\ast\in\Theta_0$ and $\max_{\theta\in\Theta_0}\| \theta - \theta^\ast\| \leq 0.05$.
For all $t$, $\Theta_t$ is a simplex updated using SME with estimation horizon $N_\Theta = 5$, and the nominal parameter $\theta^0_t$ is the mean of the vertices of $\Theta_t$.

The state and control sets are $\mathcal{X}= \mathbb{R}^{n_x}$, $\mathcal{U} = \{u: \| u \|_\infty \leq 1\}$, and the disturbance set is $\mathcal{W}=\{ B_w \hat{w} : \|\hat{w}\|_\infty \leq 0.01\}$. The cost matrices are $Q = I_{n_x}$, $R = I_{n_u}$, the prediction horizon is $N = 10$, and each simulation is run for $10$ time steps with a randomly chosen feasible initial condition.

The computation of $V,K$ (Algorithm~\ref{alg:term}, step 1) was performed using an LDI containing the system dynamics for all $x\in\hat{\mathcal{X}} = \{x : \|x\|_\infty \leq 1.5 \}$ and  $\theta\in\Theta_0$. The state perturbation constraint set is a simplex: $\mathcal{S} = \{s : [-I_{n_x} \ \underline{1}]^\top s \leq 0.5\underline{1}\}$. No control perturbation set is needed ($\mathcal{V} = \mathbb{R}^{n_u}$) because the model is linear in $u$. Since the uncertain terms in the model are quadratic, bounds on the errors $\delta^0$ and $\delta^1$ are determined by the vertices of $\Theta_t$ and $\mathcal{S}$. Hence in this case $\mathcal{W}^0$ and $\mathcal{W}^1$ have at most $\nu_\theta= n_\theta+1$ and $\nu_1 = (n_\theta+1)^2$ vertices.

We apply Algorithm \ref{alg:mpc} with solution $\algtol = 10^{-3}$ using MOSEK~\cite{mosek } with Yalmip~\cite{Yalmip} to solve Problem~(\ref{opt:mpc}) (M3 Pro, 36 GB memory).%
\footnote{Simulation code: {https://github.com/markcannon/ellipsoidal-anmpc}}
Table~\ref{table:computation} shows the average time for a single iteration of Algorithm~1, the main component of which is Problem~(\ref{opt:mpc}) in line 5, for various state, control and parameter dimensions $n_x$, $n_u$, $n_\theta$. In each case the disturbance dimension is $n_w=2$, and the computation time is the mean of $20$ randomly generated problems of a given size.
Computation is primarily determined
by the number of SOC constraints in~Problem~(\ref{opt:mpc}), which is dominated by the number of vertices of the set $\mathcal{W}^1$, and in this example scales as $O(n_xn_\theta^2N)$.

\begin{figure}[h]
\centerline{\includegraphics[scale=0.48, trim=15mm 78mm 22mm 84mm, clip]{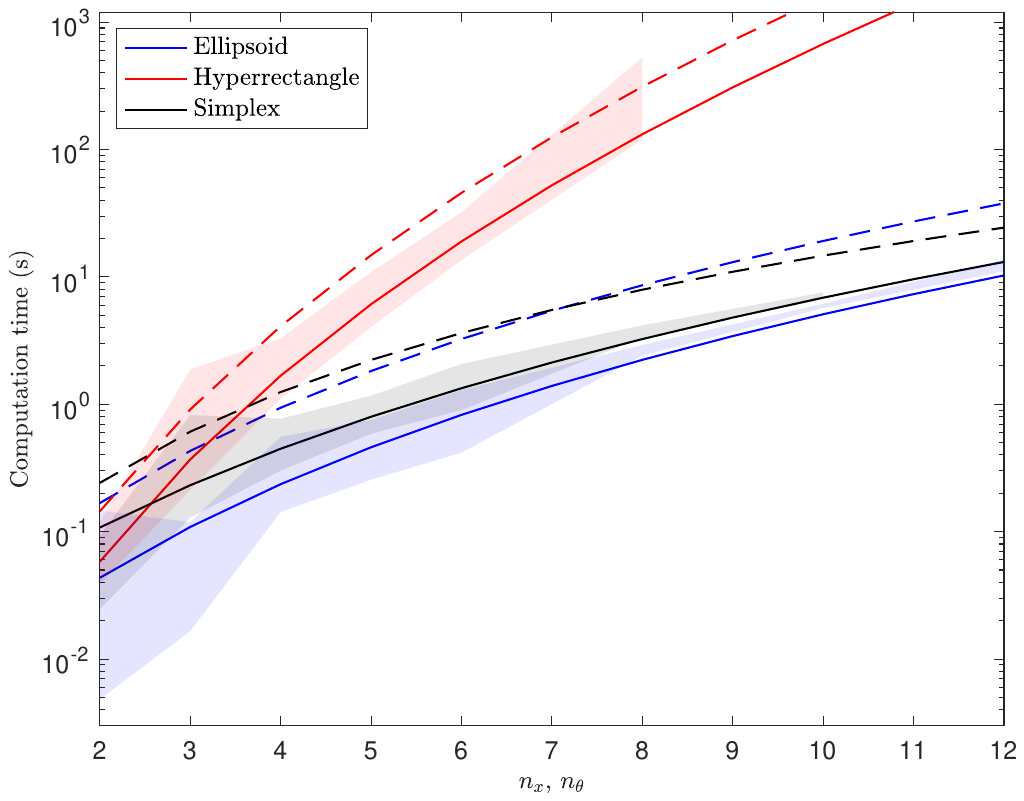}}
\caption{Average time required for one iteration (solid lines) and total time to convergence (dashed lines), 
as a function of $n_x=n_\theta$. Shaded regions indicate the upper and lower bounds on observed computation time per iteration.}%
\label{fig:timing}%
\end{figure}

\begin{figure}[h]
\centerline{\includegraphics[scale=0.48, trim=15mm 78mm 22mm 84mm, clip]{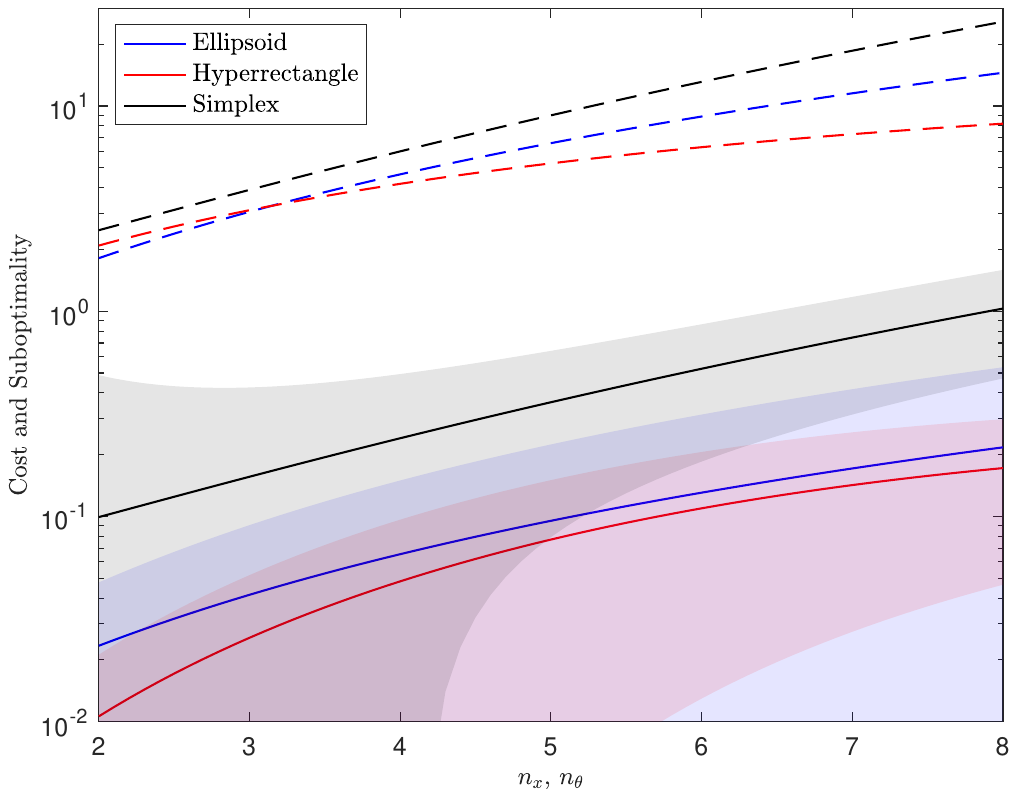}}
\caption{Suboptimality of the first iteration at $t=0$, as a fraction of the optimal cost for no uncertainty: meam values (solid lines) and 2 standard deviation bounds (shaded regions); and mean cost evaluated for closed loop trajectories over $10$ time steps (dashed lines),
as a function of $n_x=n_\theta$.}%
\label{fig:suboptimality}%
\end{figure}

As discussed in Remark~\ref{rem:terminal_update}, the algorithm parameters may be updated online using the current estimate $\Theta_t$ by solving a SDP to minimize $\tau$ subject to (\ref{eq:cost_lmi}). This requires around $50$\,ms for $n_x= n_\theta= 2$, which is approximately the same as the average time for one iteration of the corresponding online MPC optimization. However, the SDP computation grows faster than that of Problem~(\ref{opt:mpc}),
and for $n_x=n_\theta= 8$ requires more than $100$\,s on average, or 50 times the computation for one iteration of the online MPC optimization. Therefore the decision to recompute algorithm parameters online will depend on the problem size and available computational resources.

To provide a comparison with robust polytopic tube NMPC, we implement the controller of~\cite{Bue24} on the same randomly generated benchmark problem set. We consider configuration-constrained polytopic tubes~\cite{villanueva24} with  polytopic cross-sections parameterized as $\mathcal{X}_k = \{x: H x \leq h_k\}$, where $H$ is fixed and $h_k$,  $k\in\mathbb{N}_{[0,N]}$ are decision variables optimized online. We evaluate two choices for $H$: hyperrectangular tubes with $H = [I_{n_x} \  {-I}_{n_x}]^{\top\!\!}$ and simplex tubes with $H = [{-I}_{n_x} \ \underline{1}]^{\top\!\!}$. The number of vertices depends exponentially on $n_x$ for hyperrectangular tubes, and linearly for simplex tubes.
In both cases the online NMPC problem is solved via successive convexification using a difference of convex (DC) representation of the nonlinear dynamics, as in~\cite{Bue24}. The quadratic nonlinearity in this example allows the DC model to be obtained in closed form, and the resulting MPC optimization can be solved as a SOCP, enabling a fair comparison of computation and optimality using the same solver (MOSEK with Yalmip).

The numbers of SOC constraints required by hyperrectangular tube MPC and by simplex tube MPC scale as 
$O(2^{n_x}n_\theta^2N)$ 
and
$O(n_xn_\theta^2N)$,
respectively.
Consequently the computation per iteration for hyperrectangular tubes depends exponentially on $n_x$ and rapidly exceeds that of the ellipsoidal tube algorithm (Fig.\,\ref{fig:timing}), whereas the computation times for simplex tube MPC and ellipsoidal tube MPC are similar for all $n_x$, $n_\theta$.

Ellipsoidal tube MPC needs more iterations than polytopic tube MPC at each time step (2--2.5 on average compared to 3.5--4.5 across the range of problem sizes) to reach the same convergence conditions, causing the total computation per time step for ellipsoidal tube MPC to exceed both polytopic tube MPC variants for small $n_x$ and $n_\theta$ (Fig.\,\ref{fig:timing}).
This is due to the DC decomposition used in~\cite{Bue24}, which allows more accurate convex problem approximations without needing the state and control perturbation bounds $\mathcal{S}$ and $\mathcal{V}$ that were introduced in Section~\ref{sec:error_bounds} to bound linearization errors. 
However, for larger problems ($n_x$ and $n_\theta$ greater than $3$), ellipsoidal tube MPC requires significantly less total computation time than hyperrectangular tube MPC  (Fig.\,\ref{fig:timing}), while the degree of suboptimality of ellipsoidal and hyperrectangular tube MPC remain comparable as the problem size increases (Fig.\,\ref{fig:suboptimality}). Simplex tubes result in greater suboptimality since the bounds they provide on uncertain system trajectories are more conservative for  this set of problems. 
Thus, as the number of states and  unknown parameters increase, the proposed ellipsoidal tube approach provides a favourable compromise between computation and performance than either polytopic tube scheme.

\section{Conclusion}\label{sec:conclusion}
This note introduces a robust nonlinear MPC strategy with online parameter adaptation based on ellipsoidal tubes. The algorithm is recursively feasible, ensures robust closed loop stability, and requires a sequence of convex optimization to be solved online. The computational requirement scales favourably with the dimensions of the model and uncertain parameters due to the use of ellipsoidal tubes.
A promising research direction is to develop a bespoke first order solver for the online optimization to more effectively exploit warm-starting and to allow parallelization of computation. 
Another potential extension is to consider convexification methods using differences of convex functions (as is done in~\cite{Bue24} with polytopic tubes) in the context of ellipsoidal tubes.

\bibliographystyle{IEEEtran}
\bibliography{ellipsoidal}

\end{document}